\newtheorem{remark}{Remark}[section]
\newcommand{\ds}{\displaystyle}
\begin{document}
\title{Error estimates for Euler discretization of high-index saddle dynamics}
\author{
Lei Zhang\thanks{Beijing International Center for Mathematical Research, Center for Quantitative Biology, Peking University, Beijing 100871, China (zhangl@math.pku.edu.cn)}
\and
Pingwen Zhang\thanks{School of Mathematical Sciences, Laboratory of Mathematics and Applied Mathematics, Peking University, Beijing 100871, China (pzhang@pku.edu.cn).}
\and
Xiangcheng Zheng\thanks{School of Mathematical Sciences, Peking University, Beijing 100871, China (zhengxch@outlook.com)}
} 
\maketitle

\begin{abstract}
High-index saddle dynamics provides an effective means to compute the any-index saddle points and construct the solution landscape. In this paper we prove error estimates for Euler discretization of high-index saddle dynamics with respect to the time step size, which remains untreated in the literature. We overcome the main difficulties that lie in the strong nonlinearity of the saddle dynamics and the orthonormalization procedure in the numerical scheme that is uncommon in standard discretization of differential equations. The derived methods are further extended to study the generalized high-index saddle dynamics for non-gradient systems and provide theoretical support for the accuracy of numerical implementations.
\end{abstract}

\begin{keywords}
saddle point, saddle dynamics, solution landscape, Euler scheme, error estimate
\end{keywords}

\begin{AMS}
37M05, 37N30, 65L20
\end{AMS}

\pagestyle{myheadings}
\thispagestyle{plain}

\markboth{
}{Error estimates for saddle dynamics}

\section{Introduction}
Finding stationary points of nonlinear multivariable energy functions $E({x})$ has attracted extensive interest in different scientific communities during the past decades. It plays a crucial role in determining many physical or chemical properties of complex systems, examples of which include the critical nuclei in phase transition \cite{EV2010, npj2016, ZhangChe, ZhangCheDu}, defects in liquid crystals \cite{Han2019transition, wang2021modeling}, self-assembly of polymers \cite{HanXu, XTKW2014}, molecular clusters \cite{baker1986,Doye}, and artificial neural network \cite{goodfellow2016deep}.
According to Morse theory \cite{Milnor}, the local stability of stationary points, including both stable minima and unstable saddle points, can be characterized by the Morse index. The (Morse) index of a nondegenerate saddle point is the maximal dimension of a subspace on which its Hessian matrix is negative definite, i.e. the number of negative eigenvalues of the Hessian matrix. In particular, the stable stationary point (minimizer) can be regarded as an index-0 saddle point with no unstable direction.

Due to the complex geometric structures of the energy functions and the unstable nature of saddle points, how to efficiently compute multiple stationary points remains a challenging problem \cite{Doye,Farr,Li2001,Mehta}. Most existing efforts focus on finding the index-1 saddle point or transition state, i.e. a stationary point where the Hessian has one and only one negative eigenvalue. One of the popular approaches is the class of surface walking methods \cite{npj2016}.
In particular, the dimer-type method \cite{Dimer,ZhaDu,ZhaSISC} and the gentlest ascent dynamics \cite{EZho}, as the representatives of the surface walking methods, were developed to compute  the index-1 saddle points.
Recently, inspired by the optimization-based shrinking dimer method \cite{ZhaSISC}, the high-index saddle dynamics (HiSD) was proposed to search for any-index saddle points \cite{YinSISC}. 
The HiSD method provides an efficient tool to construct the solution landscape of the energy systems, which describes a pathway map that starts with a parent state (the highest-index saddle point) and then relates the low-index saddle points down to all the minimizers \cite{YinPRL}. The solution landscape approach has been applied to study various physical systems such as liquid crystals \cite{HanYin, Han2021,Yin2021}, polymers \cite{Xu_PRE}, and quasicrystals \cite{Yin2020nucleation}. Moreover, based on the HiSD method, a generalized high-index saddle dynamics (GHiSD) was further proposed to compute the any-index saddle points and solution landscapes of dynamical (non-gradient) systems \cite{YinSCM}.

Despite the sustained development of the saddle-point searching algorithms, the rigorous numerical analysis on them are still far from well-developed. Some existing studies mainly focus on the index-1 saddle points. For instance, in \cite{ZhaDu}, the time discretization of the shrinking dimer dynamics was studied, and the linear local asymptotic stability analysis and optimal error reduction rates were carried out. Global convergence and nonlinear asymptotic stability were also illustrated for some specific systems. In \cite{Gou}, the dimer method with linesearch for the step size and the preconditioning was investigated. Local convergence and error reduction were proved and, in particular, the $O(l^2)$ accuracy between the stationary point of the algorithm and the saddle was proved where $l$ refers to the dimer length. 
In \cite{Lev}, the existing local convergence results for dimer-type and gentlest ascent dynamics methods were extended by developing an improved estimate on the region of attraction of index-1 saddle points that goes beyond the linearized regime. In \cite{Gao}, a quadratic local convergence rate in terms of number of iterations was proved for an iterative minimization scheme for searching index-1 saddle points of energy functions.

The aforementioned works, which provide asymptotic convergence results of, e.g. $x_n-x_*$, where $x_n$ and $x_*$ refer to the numerical solution at the $n$-th iteration and the limit (target saddle point) of the scheme, respectively, made significant progress on the theoretical foundation of numerical analysis of the saddle-point searching algorithms. In contrast, the error estimate of $x_n-x(t_n)$, where $x(t_n)$ represents the exact solution of HiSD at the step $t_n$, gives the dynamical pathway
convergence of numerical solutions to the saddle dynamics, which provides important physical information, such as the transition pathway \cite{Yin2020nucleation}, and theoretically ensure the
accuracy of the constructed solution landscape \cite{YinPRL}.  Inaccurate computations of the dynamical pathway could lead to missing saddle points and an incomplete solution landscape.
Therefore, it is necessary and meaningful to perform the error estimates of $x_n-x(t_n)$
for HiSD, which motivates the investigations in the work.
Furthermore, the existing numerical analysis only focuses on the schemes of finding index-1 saddle points, while the corresponding results for numerical methods of finding high-index saddle points in \cite{YinSCM,YinSISC} are still unavailable. 

Motivated by these discussions, in this paper we aim to prove error estimates for the Euler discretization of HiSD method with respect to the time step size, which focuses more on the dynamics (i.e. the searching pathways between saddle points) than its limit. We emphasize that there exist some essential differences
and challenges from the standard methods of ODEs. First, the rigorous numerical analysis on saddle-point searching algorithms is more
challenging than gradient descent methods for finding the minima, which remains
untreated for the high-index saddle points in particular. For instance, in (\ref{FDsadk}) the left-hand side of the scheme of the $i$-th eigenvector $v_i$ is not its numerical approximation $v_{i,n}$ at the step $t_n$, but an intermediate quantity $\tilde v_{i,n}$. This is due to the fact that the outcomes $\{\tilde v_{i,n}\}$ of the schemes of $\{v_i\}$ may not be orthonormal and thus need to be further processed in the third equation of (\ref{FDsadk}). Consequently, in order to generate the error equations by subtracting the schemes from the reference equations, we will encounter $v_i(t_n)-\tilde  v_{i,n}$ instead of $v_i(t_n)-v_{i,n}$ as in the case of ODE discretizations. A natural idea is to split  $v_i(t_n)-\tilde  v_{i,n}$ as $(v_i(t_n)-v_{i,n})+(v_{i,n}-\tilde  v_{i,n})$, which requires an additional estimate of $v_{i,n}-\tilde  v_{i,n}$ that is not common in numerical methods of ODEs. Since $\{v_{i,n}\}$ is generated from $\{\tilde v_{i,n}\}$ via the orthonormalization, careful estimates are required for this procedure, which is intricate and not available in the literature. Furthermore, to support this analysis, novel techniques are developed to overcome the strong nonlinearity of the HiSD to prove several properties of the numerical solutions based on their schemes, which serves as another highlight. The aforementioned issues are the main contributions of the current work and distinguish the proposed methods from the standard numerical analysis of ODEs. By subtle treatments and estimates, we rigorously prove the first-order accuracy of the numerical scheme, which contributes to the numerical theory of HiSD and provides theoretical support for the accuracy of numerical computations. 

The rest of the paper is organized as follows: In Section 2 we present formulations of HiSD. In Section 3 we prove the error estimates for Euler discretization of index-1 saddle dynamics. In Section 4 we extend the developed techniques to prove error estimates for the Euler discretization of index-$k$ saddle dynamics. Numerical experiments are performed in Section 5 to substantiate the theoretical findings. In Section 6 we extend the proposed techniques to study the GHiSD of non-gradient systems, and we finally address the concluding remarks in Section 7.

\section{Formulation of high-index saddle dynamics}\label{sec2}
Given a twice Fr\'echet differentiable energy functional $E(x)$ with $x\in\mathbb R^N$ and define the corresponding natural force $F:\mathbb R^N\rightarrow \mathbb R^N$ and the negative Hessian $J\in \mathbb R^{N\times N}$ by $F(x)=-\nabla E(x)$ and $J(x)=-\nabla^2 E(x)$. It is clear that $J(x)=J(x)^\top$. Then the saddle dynamics for an index-$k$ saddle point ($k$-SD) of $E(x)$ with $1\leq k\in\mathbb N$ reads \cite{YinSISC}
\begin{equation}\label{sadk}
\left\{
\begin{array}{l}
\ds \frac{dx}{dt} =\beta\bigg(I -2\sum_{j=1}^k v_jv_j^\top \bigg)F(x),\\[0.075in]
\ds \frac{dv_i}{dt}=\gamma \bigg( I-v_iv_i^\top-2\sum_{j=1}^{i-1}v_jv_j^\top\bigg)J(x)v_i,~~1\leq i\leq k
\end{array}
\right.
\end{equation}
where $x$ represents a position variable, $v_i (i=1,...k)$ are $k$ directional variables, and $\beta$, $\gamma>0$ are relaxation parameters.
The system (\ref{sadk}) was formulated by the  minimax optimization framework, and it was shown in \cite[Theorem 1]{YinSISC} that, under suitable assumptions, a linearly stable solution of (\ref{sadk}) is an index-$k$ saddle point of $E(x)$ associated with $k$ eigenvectors corresponding to the $k$ negative eigenvalues of its Hessian. Furthermore, it is shown in \cite{YinSISC} that if the initial values of $\{v_i(t)\}_{i=1}^k$ for (\ref{sadk}) are  orthonormal vectors, then 
\begin{equation}\label{zxc}
 \|v_i(t)\|=1,~~1\leq i\leq k,~~\forall t>0.
 \end{equation}

In particular, the index-1 saddle points attract many interests as they connect different minimizers. This corresponds to the simplest case of (\ref{sadk}) with $k=1$, i.e., $1$-SD, which is also equivalent to the gentlest ascent dynamics proposed in \cite{EZho}
 \begin{equation}\label{sad1}
\left\{
\begin{array}{l}
\ds \frac{dx}{dt} =\beta\big(I -2 vv^\top \big)F(x),\\[0.1in]
\ds \frac{dv}{dt}=\gamma \big( I-vv^\top\big)J(x)v.
\end{array}
\right.
\end{equation}

Throughout the paper we make the following regular assumptions on the force and the Hessian:

\noindent\textbf{Assumption A:} There exists a constant $L>0$ such that the following linearly growth and Lipschitz conditions hold under the standard $l^2$ norm $\|\cdot\|$ of a vector or a matrix
$$\begin{array}{c}
\ds \|J(x_2)-J(x_1)\|+\|F(x_2)-F(x_1)\|\leq L\|x_2-x_1\|,\\[0.1in]
\ds\|F(x)\|\leq L(1+\|x\|),~~x,x_1,x_2\in \mathbb R^N.
\end{array}  $$
\begin{remark}
In many applications, there exist various energy functions $E$ satisfying the \textbf{Assumption A}, such as the Minyaev-Quapp surface \cite{Min}
\begin{equation}\label{MQS}
 E(x_1,x_2)=\cos(2x_1)+\cos(2x_2)+0.57\cos(2x_1-2x_2)
 \end{equation}
and the Eckhardt surface \cite{Eck}
\begin{equation}\label{ECS}
\begin{array}{l}
\ds E(x_1,x_2)= \text{exp}(-x_1^2-(x_2+1)^2)\\
\ds\qquad\qquad\qquad+\text{exp}(-x_1^2-(x_2-1)^2)+4 \text{exp}\bigg(-3\frac{x_1^2+x_2^2}{2}\bigg)+\frac{x_2^2}{2}. 
\end{array}
\end{equation}
\end{remark}
We then cite the continuous and discrete Gronwall inequalities that will be frequently used in this paper \cite{Bru}.
\begin{lemma}\label{lemGron}
Assume $z$, $g$ and $k$ are continuous functions on $[0,T]$, $z\geq 0$, $k\geq 0$, $g$ is non-decreasing, and 
$$z(t)\leq g(t)+\int_0^tk(s)z(s)ds,~~t\in [0,T]. $$
Then the following estimate holds
$$ z(t)\leq g(t)\text{exp}\bigg(\int_0^tk(s)ds\bigg),~~t\in [0,T].$$

Assume that the non-negative sequences $\{z_n\}_{n\geq 1}$ and $\{k_n\}_{n\geq 1}$ satisfy
$$ z_n\leq \rho+\sum_{j=1}^{n-1}k_jz_j,~~n\geq 1$$
for some $\rho\geq 0$. Then the following estimate holds
$$z_n\leq \rho\text{exp}\bigg(\sum_{j=1}^{n-1}k_j\bigg),~~n\geq 1. $$
\end{lemma}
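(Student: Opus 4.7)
The plan is to handle the two Gronwall inequalities independently, since the continuous version calls for an integrating-factor argument while the discrete version is cleanest by induction on the dominating sequence.

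For the continuous inequality, I would first set $u(t):=\int_0^t k(s)z(s)\,ds$, so that $u$ is differentiable with $u'(t)=k(t)z(t)$. Substituting the hypothesis $z(t)\leq g(t)+u(t)$ (and using $k\geq 0$) gives the differential inequality $u'(t)-k(t)u(t)\leq k(t)g(t)$. Multiplying by the integrating factor $\exp\!\bigl(-\int_0^t k(s)\,ds\bigr)$ turns the left-hand side into an exact derivative, and integrating from $0$ to $t$ (noting $u(0)=0$) yields
\begin{equation*}
u(t)\exp\!\Bigl(-\int_0^t k(s)\,ds\Bigr)\leq\int_0^t k(s)g(s)\exp\!\Bigl(-\int_0^s k(\tau)\,d\tau\Bigr)ds.
\end{equation*}
Here I would exploit the monotonicity of $g$ to pull $g(s)\leq g(t)$ out of the integral; the remaining integrand is the exact derivative of $-\exp\!\bigl(-\int_0^s k(\tau)\,d\tau\bigr)$, so the right-hand side collapses to $g(t)\bigl[1-\exp(-\int_0^t k)\bigr]$. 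Rearranging gives $u(t)\leq g(t)\bigl[\exp(\int_0^t k)-1\bigr]$, and adding $g(t)$ to both sides produces the claimed bound $z(t)\leq g(t)\exp(\int_0^t k(s)\,ds)$.

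For the discrete inequality, I would introduce the partial-sum majorant $w_n:=\rho+\sum_{j=1}^{n-1}k_j z_j$, which satisfies $z_n\leq w_n$ by hypothesis, and compute $w_{n+1}-w_n=k_n z_n\leq k_n w_n$. Thus $w_{n+1}\leq(1+k_n)w_n\leq e^{k_n}w_n$, using the elementary inequality $1+x\leq e^x$ for $x\geq 0$. Iterating and using $w_1=\rho$ gives $w_n\leq\rho\exp\!\bigl(\sum_{j=1}^{n-1}k_j\bigr)$, and the conclusion follows from $z_n\leq w_n$.

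The only mildly delicate point is the monotonicity step in the continuous argument, where the hypothesis that $g$ is non-decreasing is genuinely needed in order to factor $g(t)$ out of the integral; without it, one would be left with a convolution-type expression rather than a clean product. Beyond that, both arguments are short and essentially mechanical, so I would not expect any serious obstacle.
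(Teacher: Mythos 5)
The paper states this lemma as a citation from Brunner's book and does not supply a proof, so there is no in-paper argument to compare against. Your proof is correct and follows the standard route in both cases: the integrating-factor argument (with the monotonicity of $g$ used exactly where you flag it, to factor $g(t)$ out of the integral) for the continuous bound, and the majorant sequence $w_n$ with $1+x\leq e^x$ for the discrete bound; no gaps.
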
 

 By (\ref{zxc}), we multiply the first equation in (\ref{sadk}) by $x^\top$ to obtain
 \begin{equation}\label{mh1}
 \begin{array}{rl}
 \ds \frac{1}{2}\frac{d}{dt}\|x\|^2\hspace{-0.1in}&\ds=x^\top \frac{dx}{dt}=\beta x^\top F(x) -2\beta \sum_{j=1}^k v_j^\top F(x) x^\top v_j  \\[0.1in] 
 &\ds \leq \beta \|x\| \|F(x)\|+2\beta k
\|x\|\|F(x)\|\\[0.1in] 
 &\ds \leq (1+2k)\beta L\|x\|(1+\|x\|)\\[0.1in] 
 &\ds\leq  \frac{3(1+2k)\beta L}{2}\|x\|^2+\frac{(1+2k)\beta L}{2},
 \end{array}
 \end{equation}
 which leads to
 \begin{equation}
 \ds \|x(t)\|^2\leq \|x(0)\|^2+3(1+2k)\beta L\int_0^t\|x(s)\|^2ds+(1+2k)\beta Lt.
 \end{equation}
Then for a terminal time $T>0$, an application of the Gronwall inequality in Lemma \ref{lemGron} yields
\begin{equation}\label{QT}
\|x\|^2\leq \big(\|x(0)\|^2+(1+2k)\beta LT\big)e^{3(1+2k)\beta L T}=:Q_T.
\end{equation}
That is, within any terminal time $T$, $\|x\|$ can be bounded by some fixed constant $\sqrt{Q_T}$, and we thus assume $$J_T:=\max_{\|x\|\leq \sqrt{Q_T}}\|J(x)\|.$$

\section{Numerical analysis for index-1 saddle dynamics}
In this section, we consider the explicit Euler scheme of 1-SD (\ref{sad1}) on the time interval $[0,T]$ for some $T>0$ equipped with the initial conditions
\begin{equation}\label{inicond}
\ds x(0)=x_0,~~v(0)=v_0,~~\|v_0\|_2=1.
\end{equation} 
Although this is mathematically a special case of $k$-SD (\ref{sadk}), it is indeed the most considered case and could elucidate some key ideas of the analysis in a clear manner.

\subsection{Explicit Euler scheme}\label{sec31}
For $N_T\in\mathbb N$, we define a uniform temporal partition of $[0,T]$ by $t_n=n\tau$ for $0\leq n\leq N_T$ where $\tau=T/N_T$ stands for the time step size. We then approximate the first-order derivative by the backward Euler scheme at $t_n$ as follows
$$\frac{dg(t_n)}{dt}=\frac{g(t_n)-g(t_{n-1})}{\tau}+R_n^g $$
where $g$ refers to $x$ or $v$, and the truncation error $\|R_n^g\|=O(\tau)$. Plugging this discretization into (\ref{sad1}) yields
\begin{equation}\label{Ref}
\left\{
\begin{array}{l}
\ds x(t_{n}) =x(t_{n-1})+\tau\beta\big(I -2v(t_{n-1})v(t_{n-1})^\top \big)F(x(t_{n-1}))+\tau R_n^x,\\[0.075in]
\ds v(t_{n})=v(t_{n-1})+\tau\gamma \big( I-v(t_{n-1})v(t_{n-1})^\top\big)J(x(t_{n-1}))v(t_{n-1})+\tau R_n^v.
\end{array}
\right.
\end{equation}

Then we drop the truncation errors to obtain the explicit Euler scheme of (\ref{sad1}) with the approximations $\{x_n,v_n\}_{n=1}^{N_T}$ to $\{x(t_n),v(t_n)\}_{n=1}^{N_T}$ 
\begin{equation}\label{FDsad1}
\left\{
\begin{array}{l}
\ds x_n =x_{n-1}+\tau\beta\big(I -2v_{n-1}v_{n-1}^\top \big)F(x_{n-1}),\\[0.075in]
\ds \tilde v_{n}=v_{n-1}+\tau\gamma \big( I-v_{n-1}v_{n-1}^\top\big)J(x_{n-1})v_{n-1},\\[0.075in]
\ds  v_n=\frac{\tilde v_n}{\|\tilde v_n\|}
\end{array}
\right.
\end{equation}
for $1\leq n\leq N_T$, equipped with the initial conditions (\ref{inicond}).

From (\ref{FDsad1}) we find that although $\tilde v_n$ may not have the unit $l_2$ norm due to the discretization errors, the normalization procedure in the third equation of (\ref{FDsad1}) ensures $\|v_n\|=1$ in order to preserve the unit length of $v$ as in the continuous problem (\ref{sad1}). We thus multiply $x_n^\top$ on both sides of the first equation of (\ref{FDsad1}) to obtain
$$\|x_n\|^2\leq \|x_n\|\|x_{n-1}\|+\tau\beta \|x_n\|\|F(x_{n-1})\|+\tau\beta \|x_n\|\|F(x_{n-1})\|,$$
which leads to
\begin{equation*}
\|x_n\|-\|x_{n-1}\|\ds\leq \tau\beta  \|F(x_{n-1})\|+2\tau\beta\|F(x_{n-1})\|\leq 3\tau\beta L(1+\|x_{n-1}\|).
\end{equation*}
Summing up this equation from $n=1$ to $n^*\leq N_T$ yields
$$\|x_{n^*}\|-\|x_{0}\|\leq 3\beta LT+3\tau\beta L\sum_{n=1}^{n^*}\|x_{n-1}\|.$$
Then we apply the discrete Gronwall
inequality to conclude that there exists a constant $\bar Q_T$ like (\ref{QT}) such that $
\|x_n\|\leq \sqrt{\bar Q_T}$ for $0\leq n\leq N_T,
$ 
 and we thus assume
  \begin{equation}\label{JT}
 \bar J_T:=\max_{\|x\|\leq\sqrt{\bar Q_T}}\|J(x)\|.
 \end{equation}
 
 \subsection{Auxiliary estimate of $\tilde v_n-v_n$}
If we intend to find the error equation of $e^v_n:=v(t_n)-v_n$ by subtracting the second equation of (\ref{Ref}) from that of (\ref{FDsad1}), the right-hand side of the resulting equation has $e^v_{n-1}$ while the left-hand side is $v(t_n)-\tilde v_n$ instead of $e^v_n$ due to the normalization, which is uncommon in the context of discretizing differential equations that naturally yields the errors of different time steps on both sides of the error equations. A potential idea to circumvent this issue is to split $v(t_n)-\tilde v_n$ as $(v(t_n)-v_n)+(v_n-\tilde v_n)=e^v_n+(v_n-\tilde v_n)$, which introduces an additional error $v_n-\tilde v_n$.
In particular, we need to show that the difference $v_n-\tilde v_n$ has the magnitude of $O(\tau^2)$ in order to preserve the $O(\tau)$ accuracy of the numerical scheme (\ref{FDsad1}), which motivates the following estimate.
 \begin{lemma}\label{lem1}
 Under the Assumption A, the following estimate holds
 $$ \big|\|\tilde v_n\|-1\big|\leq \big|\|\tilde v_n\|^2-1\big|\leq \tau^2\gamma^2 \bar J^2_T,~~1\leq n\leq N_T.$$
 \end{lemma}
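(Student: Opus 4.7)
The plan is to compute $\|\tilde v_n\|^2$ directly from the defining relation
\[
\tilde v_n = v_{n-1} + \tau\gamma\bigl(I - v_{n-1}v_{n-1}^\top\bigr)J(x_{n-1})v_{n-1}
\]
in the second line of (\ref{FDsad1}), and to exploit the algebraic cancellation that is already responsible for (\ref{zxc}) in the continuous case. First I would note that, by construction of the third line of (\ref{FDsad1}) together with the initial condition (\ref{inicond}), one has $\|v_{n-1}\|=1$ for every $n\geq 1$.

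Next I would expand
\[
\|\tilde v_n\|^2 = \|v_{n-1}\|^2 + 2\tau\gamma\, v_{n-1}^\top\bigl(I - v_{n-1}v_{n-1}^\top\bigr)J(x_{n-1})v_{n-1} + \tau^2\gamma^2\bigl\|\bigl(I - v_{n-1}v_{n-1}^\top\bigr)J(x_{n-1})v_{n-1}\bigr\|^2.
\]
The key observation is that the linear-in-$\tau$ cross term vanishes identically, since $\|v_{n-1}\|=1$ gives
\[
v_{n-1}^\top\bigl(I - v_{n-1}v_{n-1}^\top\bigr) = v_{n-1}^\top - (v_{n-1}^\top v_{n-1})\,v_{n-1}^\top = 0.
\]
Consequently
\[
\|\tilde v_n\|^2 - 1 = \tau^2\gamma^2\bigl\|\bigl(I - v_{n-1}v_{n-1}^\top\bigr)J(x_{n-1})v_{n-1}\bigr\|^2 \geq 0,
\]
and since $I-v_{n-1}v_{n-1}^\top$ is an orthogonal projector (hence of operator norm at most $1$), together with $\|v_{n-1}\|=1$ and the a priori bound $\|J(x_{n-1})\|\leq \bar J_T$ from (\ref{JT}), one obtains the desired bound $\|\tilde v_n\|^2-1\leq \tau^2\gamma^2\bar J_T^2$.

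Finally, the inequality $\bigl|\|\tilde v_n\|-1\bigr|\leq \bigl|\|\tilde v_n\|^2-1\bigr|$ follows from the factorization $\|\tilde v_n\|^2-1=(\|\tilde v_n\|-1)(\|\tilde v_n\|+1)$ combined with the fact, just established, that $\|\tilde v_n\|\geq 1$, so that $\|\tilde v_n\|+1\geq 2\geq 1$. There is no serious obstacle here; the whole argument hinges on the single algebraic identity $v_{n-1}^\top(I-v_{n-1}v_{n-1}^\top)=0$, which is exactly the discrete counterpart of the mechanism that preserves unit norm in the continuous flow (\ref{sad1}), and which explains why the deviation $\|\tilde v_n\|-1$ is only of order $\tau^2$ rather than $\tau$.
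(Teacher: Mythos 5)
Your proof is correct, and it is somewhat more streamlined than the one in the paper. The paper multiplies the update equation for $\tilde v_n$ separately by $v_{n-1}^\top$ and by $\tilde v_n^\top$, establishes the identities $v_{n-1}^\top\tilde v_n=1$ and $\|\tilde v_n\|^2-1=\tau\gamma(\tilde v_n-v_{n-1})^\top J(x_{n-1})v_{n-1}$, then derives an intermediate bound $\|\tilde v_n-v_{n-1}\|\leq\tau\gamma\bar J_T$ and inserts it back, followed by the same factorization $\|\tilde v_n\|^2-1=(\|\tilde v_n\|+1)(\|\tilde v_n\|-1)$. You instead square the update directly, observe that the cross term vanishes because $v_{n-1}^\top(I-v_{n-1}v_{n-1}^\top)=0$ when $\|v_{n-1}\|=1$, and land immediately on the exact identity $\|\tilde v_n\|^2-1=\tau^2\gamma^2\|(I-v_{n-1}v_{n-1}^\top)J(x_{n-1})v_{n-1}\|^2\geq0$, bounding the right side by $\tau^2\gamma^2\bar J_T^2$ since $I-v_{n-1}v_{n-1}^\top$ is a projector. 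This buys you a shorter argument, a clean nonnegativity statement (hence $\|\tilde v_n\|\geq1$, which is stronger than what the paper needs for the factorization step), and no auxiliary estimate of $\|\tilde v_n-v_{n-1}\|$. One caveat worth noting: your shortcut leans on the operator in front of $J$ being an orthogonal projector with norm one, which is special to the index-$1$ case; in the index-$k$ scheme (\ref{FDsadk}) the corresponding operator $I-v_iv_i^\top-2\sum_{j<i}v_jv_j^\top$ is not a projector, so the paper's more pedestrian route (absorbing the extra factor into the constant) generalizes to Lemma~\ref{lem1k} more directly than a verbatim repetition of your calculation would.
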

\begin{proof}
 By $\|v_{n-1}\|=1$
 we multiply $v^\top_{n-1}$ on both sides of the second equation of (\ref{FDsad1}) to obtain
\begin{equation}\label{mh2}
\begin{array}{l}
\ds v^\top_{n-1}\tilde v_n=v^\top_{n-1}v_{n-1}\\[0.075in]
\qquad+\tau\gamma\big(v^\top_{n-1}J(x_{n-1})v_{n-1}-v^\top_{n-1}v_{n-1} v^\top_{n-1}J(x_{n-1})v_{n-1}\big)=1. 
\end{array}
\end{equation}
We then multiply $\tilde v_n^\top$ on both sides of  the second equation of (\ref{FDsad1}) and apply (\ref{mh2}) to obtain
\begin{equation}\label{mh3}
\begin{array}{rl}
 \|\tilde v_n\|^2\hspace{-0.1in}&\ds=\tilde v_n^\top\tilde v_n\\[0.075in]
&\ds=\tilde v_n^\top v_{n-1}+\tau\gamma\big(\tilde v_n^\top J(x_{n-1})v_{n-1}-\tilde v_n^\top v_{n-1} v_{n-1}^\top J(x_{n-1})v_{n-1} \big)\\[0.075in]
&\ds =1+ \tau\gamma\big(\tilde v_n^\top J(x_{n-1})v_{n-1}- v_{n-1}^\top J(x_{n-1})v_{n-1} \big)\\[0.075in]
&\ds= 1+ \tau\gamma\big(\tilde v_n-v_{n-1}\big)^\top J(x_{n-1})v_{n-1}.
\end{array}
\end{equation}
Subtracting (\ref{mh2}) from (\ref{mh3}) we get
\begin{equation}\label{mh4}
\big(\tilde v_n-v_{n-1}\big)^\top\tilde v_n=\tau\gamma\big(\tilde v_n-v_{n-1}\big)^\top J(x_{n-1})v_{n-1}.
\end{equation}
By $\|v_{n-1}\|=1$ and (\ref{mh2}) we have 
$$\big(\tilde v_n-v_{n-1}\big)^\top v_{n-1}=\tilde v_n^\top v_{n-1}-v_{n-1}^\top v_{n-1}=1-1=0,$$ 
and we apply this equality and (\ref{mh4}) to obtain
\begin{equation*}
\begin{array}{rl}
\ds \|\tilde v_n-v_{n-1}\|^2\hspace{-0.1in}&\ds=(\tilde v_n-v_{n-1})^\top (\tilde v_n-v_{n-1})\\[0.075in]
&\ds=(\tilde v_n-v_{n-1})^\top \tilde v_n=\tau\gamma\big(\tilde v_n-v_{n-1}\big)^\top J(x_{n-1})v_{n-1}\\[0.075in]
&\ds\leq  \tau\gamma\|\tilde v_n-v_{n-1}\| \| J(x_{n-1})\| \|v_{n-1}\|\\[0.075in]
&\ds\leq \tau\gamma \bar J_T\|\tilde v_n-v_{n-1}\|,
\end{array}
\end{equation*}
that is,
\begin{equation}\label{vv}
\ds \|\tilde v_n-v_{n-1}\|\leq \tau\gamma \bar J_T.
\end{equation}
Note that (\ref{mh3}) implies 
\begin{equation}\label{mh5}
\ds \|\tilde v_n\|^2-1= \tau\gamma\big(\tilde v_n-v_{n-1}\big)^\top J(x_{n-1})v_{n-1}.
\end{equation}
We invoke (\ref{vv}) in (\ref{mh5}) to obtain
\begin{equation}\label{mh6}
\ds \big|\|\tilde v_n\|^2-1\big|\leq  \tau\gamma\|\tilde v_n-v_{n-1}\| \|J(x_{n-1})\|\|v_{n-1}\|\leq \tau^2\gamma^2 \bar J^2_T.
\end{equation}
We incorporate this estimate with 
\begin{equation}\label{zz}
\ds \big|\|\tilde v_n\|^2-1\big|=\big(\|\tilde v_n\|+1\big)\big|\|\tilde v_n\|-1\big|\geq \big|\|\tilde v_n\|-1\big|
\end{equation}
to complete the proof. \end{proof}

\begin{corollary}\label{Cor1}
Under the Assumption A, the following estimate holds
$$\|\tilde v_n-v_n\|\leq \tau^2\gamma^2 \bar J^2_T,~~1\leq n\leq N_T.$$
\end{corollary}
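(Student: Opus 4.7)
The plan is to exploit the definition $v_n = \tilde v_n / \|\tilde v_n\|$ from the third equation of (\ref{FDsad1}) to rewrite the difference $\tilde v_n - v_n$ in a form where the deviation of $\|\tilde v_n\|$ from $1$ appears explicitly, and then invoke Lemma \ref{lem1} to control that deviation.

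Concretely, I would first write
\begin{equation*}
\tilde v_n - v_n = \tilde v_n - \frac{\tilde v_n}{\|\tilde v_n\|} = \frac{\tilde v_n}{\|\tilde v_n\|}\bigl(\|\tilde v_n\| - 1\bigr),
\end{equation*}
which is a clean algebraic identity valid as long as $\tilde v_n \neq 0$ (and this is guaranteed for sufficiently small $\tau$, since Lemma \ref{lem1} forces $\|\tilde v_n\|$ close to $1$). Taking norms on both sides, the factor $\tilde v_n/\|\tilde v_n\|$ has unit length, so
\begin{equation*}
\|\tilde v_n - v_n\| = \bigl|\|\tilde v_n\| - 1\bigr|.
\end{equation*}
Then applying the bound $\bigl|\|\tilde v_n\| - 1\bigr| \leq \tau^2 \gamma^2 \bar J_T^2$ from Lemma \ref{lem1} yields the desired estimate.

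There is essentially no obstacle here; the corollary is a direct algebraic consequence of the normalization step combined with the previously established bound on $\|\tilde v_n\|^2 - 1$. The only mild subtlety is making sure the denominator $\|\tilde v_n\|$ is nonzero, but this is immediate from Lemma \ref{lem1} under the implicit assumption that $\tau$ is small enough that $\tau^2 \gamma^2 \bar J_T^2 < 1$, so no extra work is required.
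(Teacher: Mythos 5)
Your proof is correct and follows essentially the same approach as the paper: factoring $\tilde v_n - v_n = \frac{\tilde v_n}{\|\tilde v_n\|}(\|\tilde v_n\| - 1)$, taking norms, and applying Lemma \ref{lem1}. The brief remark about $\|\tilde v_n\| \neq 0$ for small $\tau$ is a reasonable extra observation that the paper leaves implicit.
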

\begin{proof} We subtract $\tilde v_n$ from $v_n$ to find
$$ \tilde v_n-v_n=\tilde v_n-\frac{\tilde v_n}{\|\tilde v_n\|}=\frac{\tilde v_n}{\|\tilde v_n\|}\big(\|\tilde v_n\|-1\big).$$
Thus we apply Lemma \ref{lem1} to immediately obtain
$$ \|\tilde v_n-v_n\|=\frac{\|\tilde v_n\|}{\|\tilde v_n\|}\big|\|\tilde v_n\|-1\big|\leq \tau^2\gamma^2 \bar J^2_T, $$
which completes the proof. \end{proof}
\subsection{Error estimates}
Define the errors by 
$$e^x_n:=x(t_n)-x_n,~~e^v_n:=v(t_n)-v_n,~~1\leq n\leq N_T.$$ 
We first bound $e^x_n$ in terms of $e^v_n$ in the following theorem. 
\begin{theorem}\label{thmex}
Suppose the Assumption A holds. Then the following estimate holds
$$\|e^x_{n}\|\leq Q\tau\sum_{m=1}^{n-1}\|e^v_{m}\| +Q\tau ,~~1\leq n\leq N_T. $$
Here $Q$ depends on $L$, $T$ and $\beta$ but is independent from $\tau$, $n$ and $N_T$.
\end{theorem}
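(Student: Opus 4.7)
The plan is to derive the error equation for $e^x_n$ by subtracting the first line of the scheme (\ref{FDsad1}) from the first line of the reference relation (\ref{Ref}), and then bound the resulting terms so that the recursion can be closed with the discrete Gronwall inequality. Writing $e^x_0 = 0$ and subtracting, I obtain
\begin{equation*}
e^x_n = e^x_{n-1} + \tau\beta\bigl[(I-2v(t_{n-1})v(t_{n-1})^\top)F(x(t_{n-1})) - (I-2v_{n-1}v_{n-1}^\top)F(x_{n-1})\bigr] + \tau R_n^x.
\end{equation*}
The goal is to split the bracketed difference so that every piece is bounded either by $\|e^x_{n-1}\|$ or by $\|e^v_{n-1}\|$, with constants depending only on $L$, $\beta$, $T$ and the a priori bounds $Q_T$, $\bar Q_T$.

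The natural decomposition is the following add-and-subtract: first pull the identity piece $F(x(t_{n-1})) - F(x_{n-1})$ out and bound it by $L\|e^x_{n-1}\|$ using Assumption A. For the remaining projection piece, I would write
\begin{equation*}
v(t_{n-1})v(t_{n-1})^\top F(x(t_{n-1})) - v_{n-1}v_{n-1}^\top F(x_{n-1}) = v(t_{n-1})v(t_{n-1})^\top\bigl(F(x(t_{n-1})) - F(x_{n-1})\bigr) + \bigl(v(t_{n-1})v(t_{n-1})^\top - v_{n-1}v_{n-1}^\top\bigr)F(x_{n-1}),
\end{equation*}
and then split the rank-one difference as $v(t_{n-1})(e^v_{n-1})^\top + e^v_{n-1}v_{n-1}^\top$. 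Using $\|v(t_{n-1})\| = \|v_{n-1}\| = 1$ from (\ref{zxc}) and the normalization in (\ref{FDsad1}), the Lipschitz bound on $F$, and the linear growth bound combined with the uniform bound $\|x_{n-1}\| \le \sqrt{\bar Q_T}$ established in Section \ref{sec31}, each piece is controlled by either $\|e^x_{n-1}\|$ or $\|e^v_{n-1}\|$ times a constant depending only on $L, \beta, \sqrt{\bar Q_T}$.

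Combining the estimates and using $\|R_n^x\| = O(\tau)$ yields a recursion of the form
\begin{equation*}
\|e^x_n\| \le \|e^x_{n-1}\| + C\tau\|e^x_{n-1}\| + C\tau\|e^v_{n-1}\| + C\tau^2,
\end{equation*}
for a constant $C = C(L,\beta,T)$. Telescoping from $m=1$ to $n$, using $e^x_0 = 0$ and $n\tau \le T$ to absorb the $\sum C\tau^2 \le CT\tau$ contribution, gives
\begin{equation*}
\|e^x_n\| \le C\tau\sum_{m=1}^{n-1}\|e^x_m\| + C\tau\sum_{m=1}^{n-1}\|e^v_m\| + C\tau.
\end{equation*}
A direct application of the discrete Gronwall inequality in Lemma \ref{lemGron} then absorbs the $\|e^x_m\|$ terms into a factor $\exp(CT)$, producing the claimed bound with $Q$ depending only on $L$, $\beta$, $T$.

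I expect the main delicate point to be nothing deep in itself but rather the bookkeeping in the rank-one projection splitting: one has to make sure the $vv^\top$ and $F$ factors are decoupled cleanly so that Lipschitz continuity of $F$ gives an $\|e^x\|$ contribution while the projection mismatch gives an $\|e^v\|$ contribution, and that the linear growth estimate is paired with the uniform a priori bound on $\|x_{n-1}\|$ rather than on $\|x(t_{n-1})\|$. Once this decomposition is set up, the rest is a standard discrete Gronwall argument.
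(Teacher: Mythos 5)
Your proposal is correct and follows essentially the same route as the paper: form the error equation by subtracting the scheme from the reference relation, decompose the difference of the rank-one projection terms via add-and-subtract into pieces controlled by $\|e^x_{n-1}\|$ (via the Lipschitz bound on $F$) and by $\|e^v_{n-1}\|$ (via the linear growth bound and the a priori bounds on $\|x\|$), telescope, and close with the discrete Gronwall inequality. The only difference is cosmetic: you freeze $F$ at the discrete state $F(x_{n-1})$ (using $\bar Q_T$) when pairing with the $e^v$ factors and attach $v(t_{n-1})v(t_{n-1})^\top$ to the $F$-difference, whereas the paper freezes $F$ at the continuous state $F(x(t_{n-1}))$ (using $Q_T$) and attaches $v_{n-1}v_{n-1}^\top$ to the $F$-difference; both telescoping identities are valid and yield the same recursion and conclusion.
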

\begin{proof}  We subtract the first equation of (\ref{Ref}) from that of (\ref{FDsad1}) to obtain
\begin{equation}
\begin{array}{rl}
\ds e^x_n\hspace{-0.1in}&\ds=e^x_{n-1}+\tau\beta(F(x(t_{n-1}))-F(x_{n-1}))\\[0.05in]
&\ds\quad-2\tau\beta\big[v(t_{n-1})v(t_{n-1})^\top F(x(t_{n-1}))\\[0.05in]
&\ds\quad\quad-v_{n-1}v_{n-1}^\top F(x_{n-1})\big]+\tau R^x_n \\[0.05in]
&\ds=e^x_{n-1}+\tau\beta(F(x(t_{n-1}))-F(x_{n-1}))\\[0.05in]
&\ds\quad-2\tau\beta\big[e^v_{n-1}v(t_{n-1})^\top F(x(t_{n-1}))\\[0.05in]&\ds\quad\quad+v_{n-1}(e^v_{n-1})^\top F(x(t_{n-1}))\\[0.05in]
&\ds\quad\quad+v_{n-1}v_{n-1}^\top \big(F(x(t_{n-1}))-F(x_{n-1})\big)\big]+\tau R^x_n.
\end{array}
\end{equation}
We then apply the Assumption A to find
\begin{equation}
\begin{array}{rl}
\ds \|e^x_n\|\hspace{-0.1in}&\ds\leq\|e^x_{n-1}\|+\tau\beta\|F(x(t_{n-1}))-F(x_{n-1})\|\\[0.05in]
&\ds\quad+2\tau\beta\big[\|e^v_{n-1}\|\|v(t_{n-1})\| \|F(x(t_{n-1}))\|\\[0.05in]
&\ds\quad+\|v_{n-1}\|\|e^v_{n-1}\|\| F(x(t_{n-1}))\|\\[0.05in]
&\ds\quad+\|v_{n-1}\|\|v_{n-1}\| \|F(x(t_{n-1}))-F(x_{n-1})\|\big]+\tau \|R^x_n\|\\[0.05in]
&\ds\leq \|e^x_{n-1}\|+\tau\beta L\|e^x_{n-1}\|+2\tau\beta\big[\|e^v_{n-1}\| L(1+\sqrt{Q_T})\\[0.05in]
&\ds\quad+\|e^v_{n-1}\|L(1+\sqrt{Q_T})+ L\|e^x_{n-1}\|\big]+\tau \|R^x_n\|\\[0.05in]
&\ds\leq (1+3\tau\beta L)\|e^x_{n-1}\|+4\tau\beta L(1+\sqrt{Q_T})\|e^v_{n-1}\| +\tau \|R^x_n\|.
\end{array}
\end{equation}
Adding this equation from $n=1$ to $n_*$ leads to 
$$\|e^x_{n_*}\|\leq 3\tau\beta L\sum_{n=1}^{n_*}\|e^x_{n-1}\|+\sum_{n=1}^{n_*}\Big[4\tau\beta L(1+\sqrt{Q_T})\|e^v_{n-1}\| +\tau \|R^x_n\| \Big].$$ 
Then an application of the discrete Gronwall inequality yields
$$\|e^x_{n_*}\|\leq Q\sum_{n=1}^{n_*}\Big[\tau\|e^v_{n-1}\| +\tau \|R^x_n\| \Big].$$ 
We incorporate this estimate with $\|R^x_n\|=O(\tau)$ to complete the proof. \end{proof}

Based on the derived results, we prove the error estimates of the explicit Euler scheme (\ref{FDsad1}) in the following theorem.

\begin{theorem}\label{thmev}
Suppose the Assumption A holds. Then the following estimate holds
$$\|e^x_{n}\|+\|e^v_{n}\|\leq Q\tau,~~1\leq n\leq N_T. $$
Here $Q$ depends on $L$, $T$ and $\beta$ but is independent from $\tau$, $n$ and $N_T$.
\end{theorem}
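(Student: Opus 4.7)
The plan is to derive a companion inequality to Theorem \ref{thmex} by producing a bound for $\|e^v_n\|$ in terms of the past errors $\|e^x_m\|$ and $\|e^v_m\|$, then couple the two inequalities and close the loop with the discrete Gronwall inequality. The key novelty in carrying this out is handling the mismatch noted in Section 3.2: subtracting the $v$-equations of (\ref{Ref}) and (\ref{FDsad1}) gives $v(t_n)-\tilde v_n$ on the left, not $e^v_n$. I will therefore write $v(t_n)-\tilde v_n = e^v_n + (v_n-\tilde v_n)$ and invoke Corollary \ref{Cor1} to absorb $\|v_n-\tilde v_n\|\leq \tau^2\gamma^2\bar J_T^2$ into the $O(\tau)$ remainder.

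First I would subtract the second equation of (\ref{FDsad1}) from that of (\ref{Ref}) to obtain
\begin{equation*}
e^v_n + (v_n-\tilde v_n) = e^v_{n-1} + \tau\gamma\bigl[(I-v(t_{n-1})v(t_{n-1})^\top)J(x(t_{n-1}))v(t_{n-1}) - (I-v_{n-1}v_{n-1}^\top)J(x_{n-1})v_{n-1}\bigr] + \tau R^v_n.
\end{equation*}
The bracketed difference splits into $J(x(t_{n-1}))v(t_{n-1})-J(x_{n-1})v_{n-1}$ plus a triple-product difference $v v^\top Jv - v_{n-1}v_{n-1}^\top J v_{n-1}$. Each is handled by the standard telescoping trick $abc-a'b'c' = (a-a')bc+a'(b-b')c+a'b'(c-c')$, and then estimated using the Assumption A (Lipschitz continuity of $J$), together with $\|v(t_{n-1})\|=\|v_{n-1}\|=1$ and the uniform bound $\|J(x_{n-1})\|,\|J(x(t_{n-1}))\|\leq \bar J_T$ supplied by (\ref{JT}) and (\ref{QT}). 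The result is a bound of the form
\begin{equation*}
\|e^v_n\|\leq (1+Q_1\tau)\|e^v_{n-1}\| + Q_1\tau\|e^x_{n-1}\| + Q_1\tau^2 + \tau\|R^v_n\|,
\end{equation*}
where the $\tau^2$ term comes from Corollary \ref{Cor1} and $\|R^v_n\|=O(\tau)$.

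Next I would sum this inequality from $n=1$ to $n_*$, use $\|R^v_n\|=O(\tau)$, and apply the discrete Gronwall inequality of Lemma \ref{lemGron} to obtain
\begin{equation*}
\|e^v_{n_*}\|\leq Q\tau\sum_{m=1}^{n_*-1}\|e^x_m\| + Q\tau,
\end{equation*}
which is perfectly symmetric to Theorem \ref{thmex}. Adding this to the bound of Theorem \ref{thmex} and setting $E_n:=\|e^x_n\|+\|e^v_n\|$ gives
\begin{equation*}
E_{n_*}\leq Q\tau\sum_{m=1}^{n_*-1}E_m + Q\tau,
\end{equation*}
and a final application of the discrete Gronwall inequality yields $E_{n_*}\leq Q\tau\exp(QT)$, which is the desired estimate.

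The main obstacle I anticipate is the careful bookkeeping of the triple-product difference $v(t_{n-1})v(t_{n-1})^\top J(x(t_{n-1}))v(t_{n-1}) - v_{n-1}v_{n-1}^\top J(x_{n-1})v_{n-1}$: one must produce three cross terms, each of which is controlled by $\|e^v_{n-1}\|$ or $\|e^x_{n-1}\|$, and verify that no extra factors appear that would damage the desired linearity. The splitting $v(t_n)-\tilde v_n = e^v_n + (v_n-\tilde v_n)$ is conceptually the subtle step that distinguishes this analysis from standard ODE discretization, but Corollary \ref{Cor1} makes its contribution harmless since $\tau^2$ is absorbed into the $O(\tau)$ target. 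Everything else is routine Lipschitz estimation and Gronwall.
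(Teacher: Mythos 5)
Your proposal is correct and uses the same key ingredients as the paper: the decomposition $v(t_n)-\tilde v_n = e^v_n + (v_n-\tilde v_n)$, Corollary \ref{Cor1} to absorb $\|v_n-\tilde v_n\|=O(\tau^2)$, the telescoping/Lipschitz estimate for the nonlinear terms, and the discrete Gronwall inequality. The only difference is one of bookkeeping: the paper substitutes the bound from Theorem \ref{thmex} for $\|e^x_{n-1}\|$ directly into the $e^v$ recursion, producing a double sum $\tau^2\sum_n\sum_m\|e^v_m\|$ that it collapses by interchanging summations, then Gronwalls on $\|e^v_n\|$ alone and plugs back; you instead keep the two inequalities side by side and Gronwall on the coupled quantity $E_n=\|e^x_n\|+\|e^v_n\|$, which is equivalent and equally valid (the intermediate Gronwall you invoke to eliminate the $\|e^v_m\|$ sum before forming $E_n$ is actually superfluous, since you could add the two summed inequalities directly).
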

\begin{proof} We subtract the second equation of (\ref{Ref}) from that of (\ref{FDsad1}) to obtain
\begin{equation*}
\begin{array}{rl}
\ds v(t_n)-\tilde v_{n}\hspace{-0.1in}&\ds=e^v_{n-1}+\tau\gamma\big(J(x(t_{n-1}))v(t_{n-1})-J(x_{n-1})v_{n-1}\big)\\[0.05in]
&\ds\quad~~-\tau\gamma \big[ v(t_{n-1})v(t_{n-1})^\top J(x(t_{n-1}))v(t_{n-1})\\[0.05in]
&\ds\qquad\quad-v_{n-1}v_{n-1}^\top J(x_{n-1})v_{n-1}\big]+\tau R_n^v \\[0.05in]
&\ds=e^v_{n-1}+\tau\gamma\big[(J(x(t_{n-1}))-J(x_{n-1}))v(t_{n-1})+J(x_{n-1})e^v_{n-1}\big]\\[0.05in]
&\ds\quad~~-\tau\gamma \big[ e^v_{n-1}v(t_{n-1})^\top J(x(t_{n-1}))v(t_{n-1})\\[0.05in]
&\ds\quad\qquad+ v_{n-1}(e^v_{n-1})^\top J(x(t_{n-1}))v(t_{n-1})\\[0.05in]
&\ds\quad\qquad+v_{n-1}v_{n-1}^\top ( J(x(t_{n-1}))-J(x_{n-1}))v(t_{n-1})\\[0.05in]
&\ds\quad\qquad+v_{n-1}v_{n-1}^\top J(x_{n-1})e^v_{n-1}\big] +\tau R_n^v,
\end{array}
\end{equation*}
which leads to
\begin{equation}\label{mh7}
\begin{array}{rl}
\ds\| v(t_n)-\tilde v_{n}\|\hspace{-0.1in}&\ds\leq \|e^v_{n-1}\|+\tau\gamma\big[L\|e^x_{n-1}\|+\bar J_T\|e^v_{n-1}\|\big]\\[0.05in]
&\ds\quad~~+\tau\gamma \big[2J_T\| e^v_{n-1}\|  +L\|e^x_{n-1}\|
+ \bar J_T \|e^v_{n-1}\|\big] +\tau R_n^v.
\end{array}
\end{equation}
We split $v(t_n)-\tilde v_{n}$ as $(v(t_n)-v_{n})+(v_n-\tilde v_{n})=e^v_n+(v_n-\tilde v_{n})$ and apply Corollary \ref{Cor1} and Theorem \ref{thmex} for (\ref{mh7}) to get
\begin{equation}\label{mh8}
\begin{array}{rl}
\ds \|e^v_{n}\|\hspace{-0.1in}&\ds\leq \|e^v_{n-1}\|+\|v_n-\tilde v_n\|+Q\tau\big(\|e^x_{n-1}\|+ \|e^v_{n-1}\|\big) +\tau \|R_n^v\|\\[0.05in]
&\ds\leq \|e^v_{n-1}\|+Q\tau\|e^v_{n-1}\|+Q\tau^2\sum_{m=1}^{n-1}\|e^v_{m}\| +Q\tau^2. 
\end{array}
\end{equation}
Adding this equation from $n=1$ to $n_*$ for $1\leq n_*\leq N_T$ and using 
$$\tau^2\sum_{n=1}^{n_*}\sum_{m=1}^{n-1}\|e^v_{m}\|=\tau^2\sum_{m=1}^{n_*-1}\sum_{n=m+1}^{n_*}\|e^v_{m}\|\leq T \tau\sum_{m=1}^{n_*-1}\|e^v_{m}\|$$
we get
\begin{equation}\label{mh9}
\ds\|e^v_{n_*}\|\leq Q\tau\sum_{n=1}^{n_*-1}\|e^v_{n}\| +Q\tau.
\end{equation}
Then an application of the discrete Gronwall inequality leads to 
$$\ds\|e^v_{n}\|\leq Q\tau,~~1\leq n\leq N_T. $$
Plugging this estimate back to the conclusion of Theorem \ref{thmex} yields the estimate of $\|e^x_n\|$ and we thus complete the proof. \end{proof}

\section{Numerical analysis for index-$k$ saddle dynamics}
In this section, we consider the explicit Euler approximation of $k$-SD (\ref{sadk}) for some $k>1$ on the time interval $[0,T]$ for some $T>0$ equipped with the initial conditions
\begin{equation}\label{inicondk}
\ds x(0)=x_0,~~v_i(0)=v_{i,0}\text{ with }v_{i,0}^\top v_{j,0}=\delta_{i,j}\text{ for }1\leq i,j\leq k
\end{equation}  
where $\delta_{i,j}=1$ if $i=j$ and $\delta_{i,j}=0$ otherwise. 

Compared with the 1-SD (\ref{sad1}), which has only one eigenvector in the system and thus only requires the normalization at each time step, the $k$-SD has $k$ eigenvectors and thus needs additional orthogonalization process, which significantly complicates the analysis.
\subsection{Explicit Euler scheme}
Similar to Section \ref{sec31}, a reference equation for $k$-SD (\ref{sadk}) reads 
\begin{equation}\label{Refk}
\left\{
\begin{array}{l}
\ds x(t_{n}) =x(t_{n-1})+\tau\beta\bigg(I -2\sum_{j=1}^k v_j(t_{n-1})v_j(t_{n-1})^\top \bigg)F(x(t_{n-1}))+\tau R_n^x,\\[0.075in]
\ds v_i(t_{n})=v_i(t_{n-1})+\tau\gamma\bigg( I-v_i(t_{n-1})v_i(t_{n-1})^\top\\
\ds\hspace{0.3in}-2\sum_{j=1}^{i-1}v_j(t_{n-1})v_j(t_{n-1})^\top\bigg)J(x(t_{n-1}))v_i(t_{n-1})+\tau R_{n}^{v_i},~~1\leq i\leq k.
\end{array}
\right.
\end{equation}
Then we drop the truncation errors to obtain the explicit Euler scheme of (\ref{sadk}) with the approximations $\{x_n,v_{i,n}\}_{n=1,i=1}^{N_T,k}$ to $\{x(t_n),v_i(t_n)\}_{n=1,i=1}^{N_T,k}$ 
\begin{equation}\label{FDsadk}
\left\{
\begin{array}{l}
\ds x_{n} =x_{n-1}+\tau\beta\bigg(I -2\sum_{j=1}^k v_{j,n-1}v_{j,n-1}^\top \bigg)F(x_{n-1}),\\[0.075in]
\ds \tilde v_{i,n}=v_{i,n-1}+\tau\gamma\bigg( I-v_{i,n-1}v_{i,n-1}^\top\\
\ds\hspace{1in}-2\sum_{j=1}^{i-1}v_{j,n-1}v_{j,n-1}^\top\bigg)J(x_{n-1})v_{i,n-1},~~1\leq i\leq k,\\
\ds  v_{i,n}=\frac{1}{Y_{i,n}}\bigg(\ds\tilde v_{i,n}-\sum_{j=1}^{i-1}(\tilde v_{i,n}^\top v_{j,n})v_{j,n}\bigg),~~1\leq i\leq k
\end{array}
\right.
\end{equation}
for $1\leq n\leq N_T$ and
$$\begin{array}{rl}
\ds Y_{i,n}:\hspace{-0.1in}&\ds=\bigg\|\tilde v_{i,n}-\sum_{j=1}^{i-1}(\tilde v_{i,n}^\top v_{j,n})v_{j,n}\bigg\|\\[0.15in]
&\ds=\bigg(\|\tilde v_{i,n}\|^2-2\sum_{j=1}^{i-1}(\tilde v_{i,n}^\top v_{j,n})^2+\sum_{j=1}^{i-1}(\tilde v_{i,n}^\top v_{j,n})v_{j,n}^\top\cdot \sum_{j=1}^{i-1}(\tilde v_{i,n}^\top v_{j,n})v_{j,n}\bigg)^{1/2}\\[0.175in]
&\ds=\bigg(\|\tilde v_{i,n}\|^2-\sum_{j=1}^{i-1}(\tilde v_{i,n}^\top v_{j,n})^2\bigg)^{1/2}, 
\end{array} $$
 equipped with the initial conditions (\ref{inicondk}). The third equation of (\ref{FDsadk}) is indeed the Gram-Schmidt normalized orthogonalization procedure in order to preserve the orthonormal property of the vectors as in the continuous problem (\ref{sadk}). Similar to the derivation of $\bar J_T$ in (\ref{JT}), we could conclude from the first equation of (\ref{FDsadk}) that there exists a positive constant $\hat J_T$ independent from $N_T$ and $\tau$ such that $\max_{1\leq n\leq N_T}$ $\|J(x_n)\|\leq\hat J_T.$

\subsection{Auxiliary estimates}
We prove several auxiliary estimates to support the error estimates. We will see that, compared with the numerical analysis of $1$-SD, the orthogonality procedure in the Euler scheme (\ref{FDsadk}) of $k$-SD brings additional difficulties that require subtle estimates. 
 \begin{lemma}\label{lem1k}
 Under the Assumption A, the following estimates hold for $1\leq n\leq N_T$ 
 $$\begin{array}{c}
 \ds \big|(\tilde v_{m,n})^\top \tilde v_{i,n}\big|\leq M\tau^2,~~1\leq m<i\leq k;\\[0.1in]
 \ds \big|\|\tilde v_{i,n}\|-1\big|\leq \big|\|\tilde v_{i,n}\|^2-1\big|\leq M\tau^2,~~1\leq i\leq k.
\end{array}  $$
Here the positive constant $M$ is independent from $n$, $N_T$ and $\tau$. 
 \end{lemma}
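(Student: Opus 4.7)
The plan is to extend the argument of Lemma \ref{lem1} to the multi-vector setting of (\ref{FDsadk}) by simultaneously exploiting two structural ingredients: the orthonormality of $\{v_{j,n-1}\}_{j=1}^{k}$, which is preserved from step $n-1$ to step $n$ by the Gram--Schmidt normalization in the third line of (\ref{FDsadk}) (and holds initially by (\ref{inicondk})), together with the symmetry of the Hessian $J$. Abbreviating the projector in the scheme by $P_i := I - v_{i,n-1}v_{i,n-1}^\top - 2\sum_{j=1}^{i-1} v_{j,n-1}v_{j,n-1}^\top$, the second equation of (\ref{FDsadk}) reads $\tilde v_{i,n} = v_{i,n-1} + \tau\gamma P_i J(x_{n-1}) v_{i,n-1}$. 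The operator norm $\|P_i\|$ is bounded by a constant depending only on $k$, and orthonormality yields three key identities that drive everything: $v_{i,n-1}^\top P_i = 0$, and, for $m<i$, $v_{m,n-1}^\top P_i = -v_{m,n-1}^\top$ and $P_m v_{i,n-1} = v_{i,n-1}$, all immediate from the definitions.

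For the norm bound I would expand $\|\tilde v_{i,n}\|^2$ straight from the scheme: the $O(\tau)$ contribution carries the factor $v_{i,n-1}^\top P_i$ and vanishes identically, leaving only a $\tau^2\gamma^2\,\|P_i J(x_{n-1}) v_{i,n-1}\|^2$ term that is controlled by a constant multiple of $\hat J_T^2 \tau^2$. This is the direct $k$-analogue of the computation (\ref{mh3})--(\ref{mh6}). The left inequality $\big|\|\tilde v_{i,n}\|-1\big|\leq \big|\|\tilde v_{i,n}\|^2-1\big|$ is then recovered from the factorization $|a^2-1| = (a+1)|a-1|\geq |a-1|$ for $a\geq 0$, exactly as in (\ref{zz}).

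The cross-term bound is the genuinely new step and the main subtlety. I would expand $\tilde v_{m,n}^\top \tilde v_{i,n}$ with $m<i$ into four pieces: the $O(1)$ piece vanishes by orthonormality, and the two $O(\tau)$ pieces read $\tau\gamma\, v_{m,n-1}^\top P_i J v_{i,n-1}$ and $\tau\gamma\, v_{m,n-1}^\top J P_m v_{i,n-1}$ (using $P_m^\top = P_m$ and $J^\top = J$ to transpose the first factor). Applying the identities $v_{m,n-1}^\top P_i = -v_{m,n-1}^\top$ and $P_m v_{i,n-1} = v_{i,n-1}$ reduces these to $-\tau\gamma\, v_{m,n-1}^\top J v_{i,n-1}$ and $+\tau\gamma\, v_{m,n-1}^\top J v_{i,n-1}$, which cancel exactly. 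Only the $O(\tau^2)$ residual $\tau^2\gamma^2\, v_{m,n-1}^\top J P_m P_i J v_{i,n-1}$ survives, and it is bounded by a constant multiple of $k^2 \hat J_T^2 \tau^2$. Taking $M$ to be the larger of the two resulting constants concludes the proof. The delicate point is the first-order cancellation: it relies simultaneously on the specific $-2$ coefficient in front of the sum $\sum_{j=1}^{i-1} v_{j,n-1}v_{j,n-1}^\top$ in $P_i$ (which produces the sign flip $v_{m,n-1}^\top P_i = -v_{m,n-1}^\top$) and on the symmetry of $J$, and the loss of the latter is precisely what will force a different treatment for the non-gradient GHiSD in Section 6.
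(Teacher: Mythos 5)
Your proof is correct and follows essentially the same route as the paper's: the first-order cancellation in $\tilde v_{m,n}^\top\tilde v_{i,n}$ rests on the sign flip $v_{m,n-1}^\top P_i = -v_{m,n-1}^\top$, the pass-through $P_m v_{i,n-1} = v_{i,n-1}$, and the symmetry $J=J^\top$, which is exactly the mechanism behind the paper's display (\ref{MHH}), while the norm bound rests on $v_{i,n-1}^\top P_i = 0$ just as in (\ref{mh2k})--(\ref{mh3k}). Your norm computation is marginally slicker -- you read off $\|\tilde v_{i,n}\|^2 = 1 + \tau^2\gamma^2\|P_iJ(x_{n-1})v_{i,n-1}\|^2$ directly rather than going through the paper's intermediate bound on $\|\tilde v_{i,n}-v_{i,n-1}\|$ -- but this is a cosmetic shortcut, not a different argument.
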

\begin{proof} To prove the first estimate, we directly calculate the product $\tilde v_{m,n}^\top \tilde v_{i,n}$ for $1\leq m< i\leq k$
\begin{equation}\label{MHH}
\begin{array}{l}
\ds \tilde v_{m,n}^\top \tilde v_{i,n}\\
\ds\quad=\bigg[v_{m,n-1}+\tau\gamma\bigg( I-v_{m,n-1}v_{m,n-1}^\top-2\sum_{j=1}^{m-1}v_{j,n-1}v_{j,n-1}^\top\bigg)J(x_{n-1})v_{m,n-1}\bigg]^\top\\
\ds~~~\quad\bigg[v_{i,n-1}+\tau\gamma\bigg( I-v_{i,n-1}v_{i,n-1}^\top-2\sum_{j=1}^{i-1}v_{j,n-1}v_{j,n-1}^\top\bigg)J(x_{n-1})v_{i,n-1}\bigg]\\
\ds\quad=\tau\gamma\bigg(v_{m,n-1}^\top J(x_{n-1})^\top v_{i,n-1}+v_{m,n-1}^\top J(x_{n-1})v_{i,n-1}-2v_{m,n-1}^\top J(x_{n-1})v_{i,n-1}\bigg)\\
\ds\quad\quad+\tau^2\gamma^2\bigg[\bigg( I-v_{m,n-1}v_{m,n-1}^\top-2\sum_{j=1}^{m-1}v_{j,n-1}v_{j,n-1}^\top\bigg)J(x_{n-1})v_{m,n-1}\bigg]^\top\\
\ds~~~\quad\quad\quad~~\bigg[\bigg( I-v_{i,n-1}v_{i,n-1}^\top-2\sum_{j=1}^{i-1}v_{j,n-1}v_{j,n-1}^\top\bigg)J(x_{n-1})v_{i,n-1}\bigg]\\
\ds\quad=\tau^2\gamma^2\bigg[\bigg( I-v_{m,n-1}v_{m,n-1}^\top-2\sum_{j=1}^{m-1}v_{j,n-1}v_{j,n-1}^\top\bigg)J(x_{n-1})v_{m,n-1}\bigg]^\top\\
\ds~~~\quad\quad\quad~~\bigg[\bigg( I-v_{i,n-1}v_{i,n-1}^\top-2\sum_{j=1}^{i-1}v_{j,n-1}v_{j,n-1}^\top\bigg)J(x_{n-1})v_{i,n-1}\bigg]
\end{array}
\end{equation}
where we have used 
$$ v^\top_{m,n-1}J(x_{n-1})^\top v_{i,n-1}+v_{m,n-1}^\top J(x_{n-1})v_{i,n-1}-2v_{m,n-1}^\top J(x_{n-1})v_{i,n-1}=0$$
by the symmetry of $J$. Therefore, we apply the boundedness of $J$ to obtain for some constant $Q$
\begin{equation}
\big|\tilde v_{m,n}^\top \tilde v_{i,n}\big|\leq Q\tau^2,~~1\leq m<i\leq k.
\end{equation}

We then turn to estimate the case $m=i$, that is, the norm $\|\tilde v_{i,n}\|^2$.
By $\|v_{i,n-1}\|=1$, we multiply $v^\top_{i,n-1}$ on both sides of the second equation of (\ref{FDsadk}) to obtain for $1\leq i\leq k$
\begin{equation}\label{mh2k}
\begin{array}{l}
\ds v^\top_{i,n-1}\tilde v_{i,n}=v^\top_{i,n-1}v_{i,n-1}+\tau\gamma\bigg( v^\top_{i,n-1}-v^\top_{i,n-1}v_{i,n-1}v_{i,n-1}^\top\\
\ds\qquad\qquad\qquad-2\sum_{j=1}^{i-1}v^\top_{i,n-1}v_{j,n-1}v_{j,n-1}^\top\bigg)J(x_{n-1})v_{i,n-1}=1.
\end{array}
\end{equation}
We then multiply $\tilde v_{i,n}^\top$ on both sides of the second equation of (\ref{FDsadk}) and apply (\ref{mh2k}) and the orthogonality of $\{v_{i,n-1}\}_{i=1}^k$ as well as
$$ \tilde v_{i,n}^\top v_{j,n-1}=(\tilde v_{i,n}-v_{i,n-1})^\top v_{j,n-1},~~1\leq j<i$$
 to obtain
\begin{equation}\label{mh3k}
\begin{array}{l}
\ds \tilde v_{i,n}^\top\tilde v_{i,n}=\tilde v_{i,n}^\top v_{i,n-1}+\tau\gamma\bigg( \tilde v_{i,n}^\top-\tilde v_{i,n}^\top v_{i,n-1}v_{i,n-1}^\top\\
\ds\hspace{1in}-2\sum_{j=1}^{i-1}\tilde v_{i,n}^\top v_{j,n-1}v_{j,n-1}^\top\bigg)J(x_{n-1})v_{i,n-1}\\
\ds\hspace{0.5in} =1+\tau\gamma\bigg( \tilde v_{i,n}^\top-v_{i,n-1}^\top\\
\ds\hspace{1in}-2\sum_{j=1}^{i-1}(\tilde v_{i,n}-v_{i,n-1})^\top v_{j,n-1}v_{j,n-1}^\top\bigg)J(x_{n-1})v_{i,n-1}.
\end{array}
\end{equation}
Subtracting (\ref{mh2k}) from (\ref{mh3k}) and using $\big(\tilde v_{i,n}-v_{i,n-1}\big)^\top v_{i,n-1}=0$ we get
\begin{equation*}
\begin{array}{l}
\ds \|\tilde v_{i,n}-v_{i,n-1}\|^2=\big(\tilde v_{i,n}-v_{i,n-1}\big)^\top\tilde v_{i,n}\\[0.1in]
\ds\qquad=\tau\gamma\bigg( \tilde v_{i,n}^\top-v_{i,n-1}^\top-2\sum_{j=1}^{i-1}(\tilde v_{i,n}-v_{i,n-1})^\top v_{j,n-1}v_{j,n-1}^\top\bigg)J(x_{n-1})v_{i,n-1}\\
\ds\qquad\leq  \tau\gamma \hat J_T\big(1+2(i-1)\big)\|\tilde v_{i,n}-v_{i,n-1}\|,
\end{array}
\end{equation*}
which gives
\begin{equation*}
\ds \|\tilde v_{i,n}-v_{i,n-1}\|\leq  \tau\gamma \hat J_T\big(1+2(i-1)\big).
\end{equation*}
Invoking this in (\ref{mh3k}) leads to
$$\begin{array}{l}
\ds \big|\|\tilde v_{i,n}\|^2-1\big|\\
\ds\quad\leq \tau\gamma\bigg( \|\tilde v_{i,n}-v_{i,n-1}\|+2\sum_{j=1}^{i-1}\|\tilde v_{i,n}-v_{i,n-1}\|\bigg)\|J(x_{n-1})\|\\[0.2in]
\ds\quad\leq \tau^2\big[\gamma \hat J_T\big(1+2(k-1)\big)\big]^2,~~1\leq i\leq k,~~1\leq n\leq N_T,
\end{array} $$
which completes the proof.
\end{proof}

\begin{lemma}\label{lem2k}
Under the Assumption A, the following estimate holds for $1\leq n\leq N_T$ and $\tau$ sufficiently small 
$$\|v_{i,n}-\tilde v_{i,n}\|\leq Q\tau^2,~~1\leq i\leq k.$$
Here the positive constant $Q$ is independent from $n$, $N_T$ and $\tau$. 
\end{lemma}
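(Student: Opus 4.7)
The plan is to argue by induction on $i$ from $1$ to $k$, exploiting the Gram--Schmidt structure of the third equation in (\ref{FDsadk}) together with the two auxiliary bounds already established in Lemma \ref{lem1k}, namely $|\tilde v_{m,n}^\top \tilde v_{i,n}|\le M\tau^2$ for $m<i$ and $\bigl|\|\tilde v_{i,n}\|^2-1\bigr|\le M\tau^2$. The base case $i=1$ is essentially Corollary \ref{Cor1}: since the sum is empty, $Y_{1,n}=\|\tilde v_{1,n}\|$ and the argument used there (write $\tilde v_{1,n}-v_{1,n}=\tilde v_{1,n}(1-1/\|\tilde v_{1,n}\|)/\cdot$ and invoke Lemma \ref{lem1k}) gives $\|v_{1,n}-\tilde v_{1,n}\|\le M\tau^2$.

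For the inductive step, assume $\|v_{j,n}-\tilde v_{j,n}\|\le Q\tau^2$ for every $j<i$. I would first use the decomposition
\[
\tilde v_{i,n}^\top v_{j,n} \;=\; \tilde v_{i,n}^\top \tilde v_{j,n} + \tilde v_{i,n}^\top (v_{j,n}-\tilde v_{j,n}),
\]
so that Lemma \ref{lem1k} controls the first summand by $M\tau^2$ while the induction hypothesis plus $\|\tilde v_{i,n}\|\le 1+M\tau^2$ controls the second, yielding $|\tilde v_{i,n}^\top v_{j,n}|\le Q\tau^2$ for each $j<i$. Plugging this into the definition of $Y_{i,n}$ gives
\[
\bigl|Y_{i,n}^2-1\bigr| \;\le\; \bigl|\|\tilde v_{i,n}\|^2-1\bigr| + \sum_{j=1}^{i-1}(\tilde v_{i,n}^\top v_{j,n})^2 \;\le\; M\tau^2 + Q\tau^4,
\]
and for $\tau$ small enough so that $Y_{i,n}\ge 1/2$, we deduce $|Y_{i,n}-1|=|Y_{i,n}^2-1|/(Y_{i,n}+1)\le Q\tau^2$ and in particular $1/Y_{i,n}=1+O(\tau^2)$.

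With these two ingredients in hand I would rewrite
\[
v_{i,n}-\tilde v_{i,n} \;=\; \Bigl(\tfrac{1}{Y_{i,n}}-1\Bigr)\tilde v_{i,n} \;-\; \tfrac{1}{Y_{i,n}}\sum_{j=1}^{i-1}(\tilde v_{i,n}^\top v_{j,n})\,v_{j,n},
\]
and take norms: the first term is bounded by $|1/Y_{i,n}-1|\,\|\tilde v_{i,n}\|\le Q\tau^2$, and the second by $(1/Y_{i,n})\sum_{j=1}^{i-1}|\tilde v_{i,n}^\top v_{j,n}|\,\|v_{j,n}\|\le Q\tau^2$ since $\|v_{j,n}\|=1$ by construction of the orthonormalization. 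Summing the two contributions closes the induction and yields the desired bound with a constant $Q$ that depends on $k$, $\gamma$, $\hat J_T$ but is independent of $n$, $N_T$, and $\tau$.

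The main obstacle I anticipate is the bookkeeping of the induction constant: each step of the induction inflates $Q$ through the decomposition $\tilde v_{i,n}^\top v_{j,n}=\tilde v_{i,n}^\top \tilde v_{j,n}+\tilde v_{i,n}^\top(v_{j,n}-\tilde v_{j,n})$, so one must make sure the constants remain independent of $\tau$ uniformly for $1\le i\le k$. A secondary but crucial subtlety is the smallness assumption on $\tau$ needed to guarantee $Y_{i,n}\ge 1/2$ (equivalently, that the Gram--Schmidt step is not degenerate); this is harmless but must be stated explicitly in the hypothesis, which is why the lemma is phrased ``for $\tau$ sufficiently small.''
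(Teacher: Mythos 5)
Your argument is correct and reaches the same endpoint, but the induction is organized differently from the paper. The paper fixes a target constant $G>M$ and proves the intermediate bound $|\tilde v_{i,n}^\top v_{m,n}|\leq G\tau^2$ by induction on the \emph{second} index $m$, substituting the exact Gram--Schmidt formula for $v_{m,n}$ into the inner product and choosing the explicit constraint (\ref{aimh}) so that the induction closes with the \emph{same} constant $G$ at every step; only afterward does it bound $Y_{i,n}$ and $\|v_{i,n}-\tilde v_{i,n}\|$ for all $i$ at once. You instead induct on $i$ with the lemma's own conclusion as the hypothesis, and control the mixed products via the perturbative split $\tilde v_{i,n}^\top v_{j,n}=\tilde v_{i,n}^\top \tilde v_{j,n}+\tilde v_{i,n}^\top(v_{j,n}-\tilde v_{j,n})$ rather than the exact Gram--Schmidt expansion. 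Both routes are sound: yours is logically leaner (a single induction, no auxiliary constant), while the paper's choice of a uniform $G$ and the explicit threshold (\ref{aimh}) neatly sidestep the constant-inflation bookkeeping you flag. In your version, to avoid an apparent circularity between the smallness threshold on $\tau$ and the growing constants, one should first solve the $\tau$-free recursion determining $Q_1,\ldots,Q_k$ and only then fix a single threshold on $\tau$; since $k$ is finite this is harmless, but the argument should be presented in that order.
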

\begin{proof}
We first estimate $\tilde v_{i,n}^\top v_{m,n}$ for $1\leq m<i\leq k$ and $1\leq n\leq N_T$. By the definition of $v_{m,n}$ in the third equation of (\ref{FDsadk}), for a fixed $1\leq m<k$ the estimates of $\tilde v_{i,n}^\top v_{m,n}$ for $m< i\leq k$ require those of $\tilde v_{i,n}^\top v_{j,n}$ for $1\leq j<m$ and $m\leq i\leq k$. That is, we could estimate $\tilde v_{i,n}^\top v_{m,n}$ step by step with respect to $m$. There are various ways to perform the estimate with different constraints on $\tau$, and we present a simple choice for illustration. 

Let $G>M$ be a fixed positive number where $M$ is defined in Lemma \ref{lem1k}. Then we intend to prove that, if $\tau$ satisfies the constraint
\begin{equation}\label{aimh}
\frac{M+\tau^2(k-1)G^2}{\big(1-M\tau^2-\tau^4(k-1)G^2\big)^{1/2}}\leq G,
\end{equation}
then the following estimates hold
\begin{equation}\label{leme3}
|\tilde v_{i,n}^\top v_{m,n}|\leq G\tau^2,~~1\leq m<i\leq k.
\end{equation}
Note that the condition (\ref{aimh}) holds true if $\tau$ is sufficiently small. We prove this argument by induction on the subscription $m$.
For $m=1$ we apply Lemma \ref{lem1k} and the definition of $\tilde v_{1,n}$ in (\ref{FDsadk}) as well as (\ref{aimh}) to obtain 
\begin{equation}\label{leme1}
\begin{array}{rl}
\ds |\tilde v_{i,n}^\top v_{1,n}|\hspace{-0.1in}&\ds=\frac{|\tilde v_{i,n}^\top \tilde v_{1,n}|}{(\|\tilde v_{1,n}\|^2)^{1/2}}\leq \frac{M\tau^2}{(1-M\tau^2)^{1/2}}=\frac{M}{(1-M\tau^2)^{1/2}}\tau^2\\[0.15in]
&\ds \leq \frac{M+\tau^2(k-1)G^2}{\big(1-M\tau^2-\tau^4(k-1)G^2\big)^{1/2}}\tau^2\leq G\tau^2,~~1<i\leq k. 
\end{array}
\end{equation}
Thus, (\ref{leme3}) holds with $m=1$. Suppose (\ref{leme3}) holds for $1\leq m<m^*$ for some $1\leq m^*<k-1$. Then we invoke (\ref{leme3}) with $1\leq m<m^*$ and Lemma \ref{lem1k} into the expression of $\tilde v_{i,n}^\top v_{m^*,n}$ to obtain for $m^*<i\leq k$
$$\begin{array}{rl}
\ds | \tilde v_{i,n}^\top v_{m^*,n}|\hspace{-0.1in}&\ds=\frac{\ds\bigg|\tilde v_{i,n}^\top\tilde v_{m^*,n}-\sum_{j=1}^{m^*-1}(\tilde v_{m^*,n}^\top v_{j,n})(\tilde v_{i,n}^\top v_{j,n})\bigg|}{\ds\bigg(\|\tilde v_{m^*,n}\|^2-\sum_{j=1}^{m^*-1}(\tilde v_{m^*,n}^\top v_{j,n})^2\bigg)^{1/2}} \\[0.15in]
&\ds\leq\frac{\ds M\tau^2+(m^*-1)(G\tau^2)^2}{\ds (1-M\tau^2-(m^*-1)(G\tau^2)^2)^{1/2}} \\[0.15in]
&\ds \leq \frac{M+\tau^2(k-1)G^2}{\big(1-M\tau^2-\tau^4(k-1)G^2\big)^{1/2}}\tau^2\leq G\tau^2,~~m^*<i\leq k. 
\end{array}$$
That is, (\ref{leme3}) holds for $m=m^*$ and thus holds for any $1\leq m<k$ by mathematical induction, which proves (\ref{leme3}).

Based on (\ref{leme3}) and Lemma \ref{lem1k}, we bound $Y_{i,n}$ for $1\leq i\leq k$ and $1\leq n\leq N_T$ as follows
 \begin{equation}\label{bndY}
 \begin{array}{l}
 \ds |Y_{i,n}|=\bigg(\|\tilde v_{i,n}\|^2-\sum_{j=1}^{i-1}(\tilde v_{i,n}^\top v_{j,n})^2\bigg)^{1/2}\\
 \quad\,\ds\left\{
 \begin{array}{l}
 \ds \leq \bigg(1+M\tau^2+(i-1)G^2\tau^4\bigg)^{1/2}\leq (1+Q\tau^2)^{1/2};\\
 \ds \geq \bigg(1-M\tau^2-(i-1)G^2\tau^4\bigg)^{1/2}\geq (1-Q\tau^2)^{1/2}>0.
 \end{array}
 \right.
 \end{array}
\end{equation}  
Then we remain to estimate $v_{i,n}-\tilde v_{i,n}$ for $1\leq i\leq k$. According to the definition of $v_{i,n}$ we have
\begin{equation}\label{diff}
v_{i,n}-\tilde v_{i,n}=\frac{1}{Y_{i,n}}\bigg((1-Y_{i,n})\tilde v_{i,n}-\sum_{j=1}^{i-1}(\tilde v_{i,n}^\top v_{j,n})v_{j,n}\bigg),
\end{equation}
which, together with (\ref{leme3}), (\ref{bndY}) and Lemma \ref{lem1k}, implies
\begin{equation}\label{diff2}
\begin{array}{rl}
&\ds \|v_{i,n}-\tilde v_{i,n}\|\\
&\ds\qquad\leq \frac{1}{|Y_{i,n}|}\bigg(|1-Y_{i,n}|\|\tilde v_{i,n}\|+\sum_{j=1}^{i-1}|\tilde v_{i,n}^\top v_{j,n}|\bigg)\\
&\qquad\ds\leq \frac{|1-Y_{i,n}|(1+M\tau^2)+(i-1)G\tau^2}{(1-Q\tau^2)^{1/2}}\leq Q|1-Y_{i,n}|+Q\tau^2\\[0.15in]
&\qquad\ds \leq Q\max\{1-(1-Q\tau^2)^{1/2},(1+Q\tau^2)^{1/2}-1\}+Q\tau^2.
\end{array}
\end{equation}
We bound $1-(1-Q\tau^2)^{1/2}$ and $(1+Q\tau^2)^{1/2}-1$ by
$$\begin{array}{c}
\ds 1-(1-Q\tau^2)^{1/2}=\frac{Q\tau^2}{1+(1-Q\tau^2)^{1/2}}\leq Q\tau^2,\\[0.15in]
\ds (1+Q\tau^2)^{1/2}-1=\frac{Q\tau^2}{(1+Q\tau^2)^{1/2}+1}\leq Q\tau^2.
\end{array}  $$
Thus we complete the proof. \end{proof}

\subsection{Error estimates}
Define the errors 
$$e^x_n:=x(t_n)-x_n,~~e^v_{i,n}:=v_i(t_n)-v_{i,n},~~1\leq n\leq N_T,~~1\leq i\leq k.$$ 
We then perform the error estimates for the explicit Euler scheme (\ref{FDsadk}) of $k$-SD (\ref{sadk}) in the following theorem. 

\begin{theorem}\label{thmevk}
Suppose the Assumption A holds. Then the following estimate holds for $\tau$ sufficiently small 
$$\|e^x_{n}\|+\sum_{i=1}^k \|e^v_{i,n}\|\leq Q\tau,~~1\leq n\leq N_T. $$
Here $Q$ depends on $k$, $L$, $T$ and $\beta$ but is independent from $\tau$, $n$ and $N_T$.
\end{theorem}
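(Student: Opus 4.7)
The plan is to mimic the two-step strategy used for $1$-SD in Theorems \ref{thmex} and \ref{thmev}, with the coupling introduced by the Gram--Schmidt orthogonalization absorbed via the auxiliary estimate Lemma \ref{lem2k}. First I would bound $\|e^x_n\|$ in terms of the collection $\{\|e^v_{i,m}\|\}$. Subtracting the first line of (\ref{Refk}) from that of (\ref{FDsadk}) and expanding each increment $v_j(t_{n-1})v_j(t_{n-1})^\top F(x(t_{n-1}))-v_{j,n-1}v_{j,n-1}^\top F(x_{n-1})$ into the three natural pieces
\begin{equation*}
e^v_{j,n-1}v_j(t_{n-1})^\top F(x(t_{n-1}))+v_{j,n-1}(e^v_{j,n-1})^\top F(x(t_{n-1}))+v_{j,n-1}v_{j,n-1}^\top(F(x(t_{n-1}))-F(x_{n-1})),
\end{equation*}
together with Assumption A, the a priori bounds (\ref{QT}), and $\|v_{j,n-1}\|=\|v_j(t_{n-1})\|=1$, gives a one-step inequality that is a direct $k$-fold generalization of the one in Theorem \ref{thmex}. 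Summing and applying the discrete Gronwall inequality in Lemma \ref{lemGron} then yields
\begin{equation*}
\|e^x_n\|\leq Q\tau\sum_{m=1}^{n-1}\sum_{i=1}^k\|e^v_{i,m}\|+Q\tau.
\end{equation*}

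Next, for each $1\leq i\leq k$ I would subtract the $i$-th line of (\ref{Refk}) from that of (\ref{FDsadk}). Expanding $J(x(t_{n-1}))v_i(t_{n-1})-J(x_{n-1})v_{i,n-1}$ and the bilinear blocks $v_jv_j^\top J v_i$ for $1\leq j\leq i-1$ and $j=i$ in the same three-piece style, and using the uniform bound $\hat J_T$ on $\|J(x_{n-1})\|$, yields
\begin{equation*}
\|v_i(t_n)-\tilde v_{i,n}\|\leq \|e^v_{i,n-1}\|+Q\tau\|e^x_{n-1}\|+Q\tau\sum_{j=1}^{i}\|e^v_{j,n-1}\|+\tau\|R_n^{v_i}\|.
\end{equation*}
The crucial step, which distinguishes this analysis from standard ODE discretization arguments, is to split $v_i(t_n)-\tilde v_{i,n}=e^v_{i,n}+(v_{i,n}-\tilde v_{i,n})$ and absorb the orthogonalization error via Lemma \ref{lem2k}, which gives $\|v_{i,n}-\tilde v_{i,n}\|\leq Q\tau^2$. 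Substituting the first-step estimate for $\|e^x_{n-1}\|$ upgrades the inequality to a recursion for $\|e^v_{i,n}\|$ in which the non-$e^v$ contributions are of size $O(\tau^2)$ only.

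Finally, I would set $E_n:=\sum_{i=1}^k\|e^v_{i,n}\|$ and sum the recursion over $i$ to obtain an inequality of the form
\begin{equation*}
E_n\leq (1+Q\tau)E_{n-1}+Q\tau^2\sum_{m=1}^{n-2}E_m+Q\tau^2.
\end{equation*}
One further summation in $n$ together with the swapping identity $\tau^2\sum_{n}\sum_{m<n}E_m\leq T\tau\sum_m E_m$ used in the proof of Theorem \ref{thmev} puts the bound into the form required by the discrete Gronwall inequality, yielding $E_n\leq Q\tau$; plugging this back into the first-step inequality closes the estimate for $\|e^x_n\|$. The main obstacle I anticipate is the bookkeeping of the lower-triangular coupling among $\{e^v_{i,n}\}_{i=1}^k$ induced by the $\sum_{j=1}^{i-1}v_jv_j^\top$ term and the $O(\tau^2)$ perturbation produced by Lemma \ref{lem2k}; once these contributions are collapsed into the single scalar $E_n$, the Gronwall machinery proceeds exactly as in the $1$-SD case.
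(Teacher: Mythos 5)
Your proposal follows essentially the same route as the paper's own proof: the $k$-fold generalization of Theorem \ref{thmex} for $\|e^x_n\|$, the decomposition $v_i(t_n)-\tilde v_{i,n}=e^v_{i,n}+(v_{i,n}-\tilde v_{i,n})$ with Lemma \ref{lem2k} absorbing the Gram--Schmidt error at $O(\tau^2)$, the aggregation into $E^v_n:=\sum_i\|e^v_{i,n}\|$ to collapse the lower-triangular coupling, and the summation-plus-swap followed by discrete Gronwall. The only cosmetic difference (upper limit $n-2$ versus $n-1$ in your double-sum term) is immaterial since the $E_{n-1}$ contribution is already dominated by the $(1+Q\tau)E_{n-1}$ term.
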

\begin{proof} 
 Similar to the derivations in Theorem \ref{thmex} we could bound $e^x_n$ in terms of $e^v_{i,n}$ for $1\leq i\leq k$ as follows
\begin{equation}\label{MH}
\|e^x_{n}\|\leq Q\tau\sum_{m=1}^{n-1}\sum_{j=1}^k\|e^v_{j,m}\| +Q\tau ,~~1\leq n\leq N_T. 
\end{equation}
We subtract the second equation of (\ref{Refk}) from that of (\ref{FDsadk}) and apply
$v_i(t_n)-\tilde v_{i,n}$ as $(v_i(t_n)-v_{i,n})+(v_{i,n}-\tilde v_{i,n})=e^{v}_{i,n}+(v_{i,n}-\tilde v_{i,n})$
 to obtain
\begin{equation*}
\begin{array}{rl}
\ds e^v_{i,n}\hspace{-0.1in}&\ds=e^v_{i,n-1}+\tau\gamma\big(J(x(t_{n-1}))v_i(t_{n-1})-J(x_{n-1})v_{i,n-1}\big)\\[0.05in]
&\ds\quad~~-\tau\gamma \big[ v_i(t_{n-1})v_i(t_{n-1})^\top J(x(t_{n-1}))v_i(t_{n-1})\\[0.05in]
&\ds\qquad\quad-v_{i,n-1}v_{i,n-1}^\top J(x_{n-1})v_{i,n-1}\big]\\[0.05in]
&\ds\quad~~-2\tau\gamma \sum_{j=1}^{i-1}\big[ v_j(t_{n-1})v_j(t_{n-1})^\top J(x(t_{n-1}))v_i(t_{n-1})\\[0.05in]
&\ds\qquad\quad-v_{j,n-1}v_{j,n-1}^\top J(x_{n-1})v_{i,n-1}\big]-(v_{i,n}-\tilde v_{i,n})+\tau R_{n}^{v_i} ,
\end{array}
\end{equation*}
which, together with (\ref{MH}), Lemma \ref{lem2k} and the similar derivation as (\ref{mh7}), leads to
\begin{equation*}
\begin{array}{rl}
\ds\| e^v_{i,n}\|\hspace{-0.1in}&\ds\leq \|e^v_{i,n-1}\|+Q\tau\big[\|e^x_{n-1}\|+\|e^v_{i,n-1}\|\big]+Q\tau\sum_{j=1}^{i-1}\|e^v_{j,n-1}\| +Q\tau^2\\
\ds&\ds\leq \|e^v_{i,n-1}\|+Q\tau\|e^v_{i,n-1}\|+Q\tau^2\sum_{m=1}^{n-1}\sum_{j=1}^k\|e^v_{j,m}\|+Q\tau\sum_{j=1}^{i-1}\|e^v_{j,n-1}\| +Q\tau^2.
\end{array}
\end{equation*}
Adding this equation from $i=1$ to $k$ and denoting 
$$E^v_{n}:=\sum_{i=1}^k \|e^v_{i,n}\|,~~1\leq n\leq N_T $$
 yield an estimate in terms of $E^v_{n}$
\begin{equation}\label{mh8k}
\ds E^v_{n}\leq E^v_{n-1}+Q\tau E^v_{n-1}+Q\tau^2\sum_{m=1}^{n-1} E^v_{m} +Q\tau^2.
\end{equation}
Adding this equation from $n=1$ to $n_*$ and using 
$$\tau^2\sum_{n=1}^{n_*}\sum_{m=1}^{n-1}E^v_{m}=\tau^2\sum_{m=1}^{n_*-1}\sum_{n=m+1}^{n_*}E^v_{m}\leq T \tau\sum_{m=1}^{n_*-1}E^v_{m}$$
we get
\begin{equation}\label{mh9k}
\ds E^v_{n_*}\leq Q\tau\sum_{n=1}^{n_*-1}E^v_{n} +Q\tau.
\end{equation}
Then an application of the discrete Gronwall inequality leads to 
$$\ds E^v_{n}\leq Q\tau,~~1\leq n\leq N_T. $$
Plugging this estimate back to (\ref{MH}) yields the estimate of $\|e^x_n\|$ and we thus complete the proof.
\end{proof}

\section{Numerical experiments}
In this section, we carry out numerical experiments to substantiate the accuracy of the explicit Euler schemes (\ref{FDsad1}) and (\ref{FDsadk}) and the pathway convergence of the numerical solutions to the real searching pathway of HiSD. For the applications of these schemes, we refer \cite{YinSCM,YinSISC} for various physical examples and detailed discussions. We denote the following errors
$$\begin{array}{rl}
\ds\text{Err}(x)\hspace{-0.1in}&\ds:=\max_{1\leq n\leq N_T}\|x(t_n)-x_n\|,\\[0.1in]
\ds \text{Err}(v)\hspace{-0.1in}&\ds:=\max_{1\leq n\leq N_T}\|v(t_n)-v_n\|,\\[0.1in]
\ds\text{Err}(v_i)\hspace{-0.1in}&\ds:=\max_{1\leq n\leq N_T}\|v_i(t_n)-v_{i,n}\|,~~1\leq i\leq k
\end{array}  $$
and test their convergence rates. As the exact solutions to HiSD are not available, numerical solutions computed under $\tau=2^{-13}$ serve as the reference solutions for the first two examples, and we set $\beta=\gamma=T=1$ for simplicity.

\textbf{Example 1: Accuracy test under Minyaev-Quapp surface.}
We consider the saddle dynamics for the Minyaev-Quapp surface (\ref{MQS}) and compute its index-1 saddle point via scheme (\ref{FDsad1}) with the initial conditions $x(0)=(1,1)^\top$ and $v(0)=(0,1)^\top$ and index-2 saddle point via scheme (\ref{FDsadk}) with the initial conditions $x(0)=(1,1)^\top$, $v_1(0)=(0,1)^\top$ and $v_2(0)=(1,0)^\top$. Numerical results are presented in Tables \ref{table1:1}-\ref{table1:2}, which demonstrate the first-order accuracy of the explicit Euler schemes (\ref{FDsad1}) and (\ref{FDsadk}) as proved in Theorems \ref{thmev} and \ref{thmevk}.
\begin{table}[H]
\setlength{\abovecaptionskip}{0pt}
\centering
\caption{Convergence rates of scheme (\ref{FDsad1}) in Example 1.}
\begin{tabular}{ccccc} \cline{1-5}
$\tau$& Err($x$) & conv. rate &  Err($v$) &conv. rate\\ \cline{1-5}		
$1/32$  &2.19E-02	&	&1.72E-02	&\\
$1/64$	&1.03E-02&	1.09&	8.29E-03&	1.05\\
$1/128$	&4.95E-03&	1.05&	4.05E-03&	1.03\\
$1/256$	&2.40E-03&	1.04&	1.98E-03&	1.03\\
				\hline
			\end{tabular}
			\label{table1:1}
		\end{table}

\begin{table}[H]
\setlength{\abovecaptionskip}{0pt}
\centering
\caption{Convergence rates of scheme (\ref{FDsadk}) in Example 1.}
\vspace{-0.25em}	
\begin{tabular}{ccccccc} \cline{1-7}
$\tau$& Err($x$) & conv. rate &  Err($v_1$) &conv. rate& Err($v_2$) &conv. rate\\ \cline{1-7}		
$1/32$&	1.50E-02&		&1.31E-02&		&1.31E-02&\\	
$1/64$&	7.41E-03&	1.02&	6.52E-03&	1.01&	6.52E-03&	1.01\\
$1/128$&	3.66E-03&	1.02&	3.23E-03&	1.01&	3.23E-03&	1.01\\
$1/256$&	1.79E-03&	1.03&	1.59E-03&	1.02&	1.59E-03&	1.02\\
				\hline
			\end{tabular}
			\label{table1:2}
		\end{table}
		
\textbf{Example 2: Accuracy test under Eckhardt surface.}
We consider the saddle dynamics for the Eckhardt surface (\ref{ECS}) and compute its index-1 saddle point via scheme (\ref{FDsad1}) with the initial conditions 
$$x(0)=(-2,1)^\top,~~v(0)=\frac{1}{\sqrt{2}}(-1,1)^\top$$
 and index-2 saddle point via scheme (\ref{FDsadk}) with the initial conditions 
 $$x(0)=(-2,1)^\top,~~v_1(0)=\frac{1}{\sqrt{10}}(-1,3)^\top,~~v_2(0)=\frac{1}{\sqrt{10}}(3,1)^\top.$$
  Numerical results are presented in Tables \ref{table2:1}-\ref{table2:2}, which again show the first-order accuracy of the explicit Euler schemes (\ref{FDsad1}) and (\ref{FDsadk}) as proved in Theorems \ref{thmev} and \ref{thmevk}.

\begin{table}[H]
\setlength{\abovecaptionskip}{0pt}
\centering
\caption{Convergence rates of scheme (\ref{FDsad1}) in Example 2.}
\vspace{-0.25em}	
\begin{tabular}{ccccc} \cline{1-5}
$\tau$& Err($x$) & conv. rate &  Err($v$) &conv. rate\\ \cline{1-5}		
$1/32$&	1.41E-02	&	&2.16E-03	&\\
$1/64$	&6.98E-03&	1.01&	1.09E-03&	0.98\\
$1/128$&	3.45E-03&	1.01&	5.46E-04&	1.00\\
$1/256$&	1.70E-03&	1.02&	2.70E-04&	1.02\\
				\hline
			\end{tabular}
			\label{table2:1}
		\end{table}

\begin{table}[H]
\setlength{\abovecaptionskip}{0pt}
\centering
\caption{Convergence rates of scheme (\ref{FDsadk}) in Example 2.}
\vspace{-0.25em}	
\begin{tabular}{ccccccc} \cline{1-7}
$\tau$& Err($x$) & conv. rate &  Err($v_1$) &conv. rate& Err($v_2$) &conv. rate\\ \cline{1-7}		
$1/32$&	5.78E-03&		&2.25E-03&		&2.25E-03&	\\
$1/64$&	2.86E-03&	1.02&	1.11E-03&	1.01&	1.11E-03&	1.01\\
$1/128$&	1.41E-03&	1.02&	5.51E-04&	1.02&	5.51E-04&	1.02\\
$1/256$&	6.95E-04&	1.03&	2.71E-04&	1.03&	2.71E-04&	1.03\\

				\hline
			\end{tabular}
			\label{table2:2}
		\end{table}		

\textbf{Example 3: Convergence of dynamics under Eckhardt surface.}
In this example we aim to observe the numerical convergence of dynamics for HiSD. As the exact dynamics of HiSD is not available in practice, we approximate it by the numerical solutions under the very fine mesh size $\tau=2^{-8}$. The initial conditions are given as
$$x(0)=(1.5,1.2)^\top,~~v(0)=\frac{1}{\sqrt{5}}(-1,2)^\top.$$ 
Figure \ref{fig}(Left) indicates that the HiSD reaches a saddle point under both $T=10$ and $T=5$ and thus it suffices to take $T=5$. Figure \ref{fig}(Right) shows that not only the saddle point could be achieved under different step sizes, but the dynamics of HiSD globally converges to the exact one (i.e. the curve under $\tau=2^{-8}$) as the step size $\tau$ decreases. These observations justify the theoretical results and demonstrate that the proposed schemes are appropriate methods in computing the dynamic pathways for constructing the solution landscapes, e.g., \cite{HanYin,Xu_PRE,YinSCM}.
\begin{figure}[h!]
	\setlength{\abovecaptionskip}{0pt}
	\centering	\includegraphics[width=2.6in,height=2.2in]{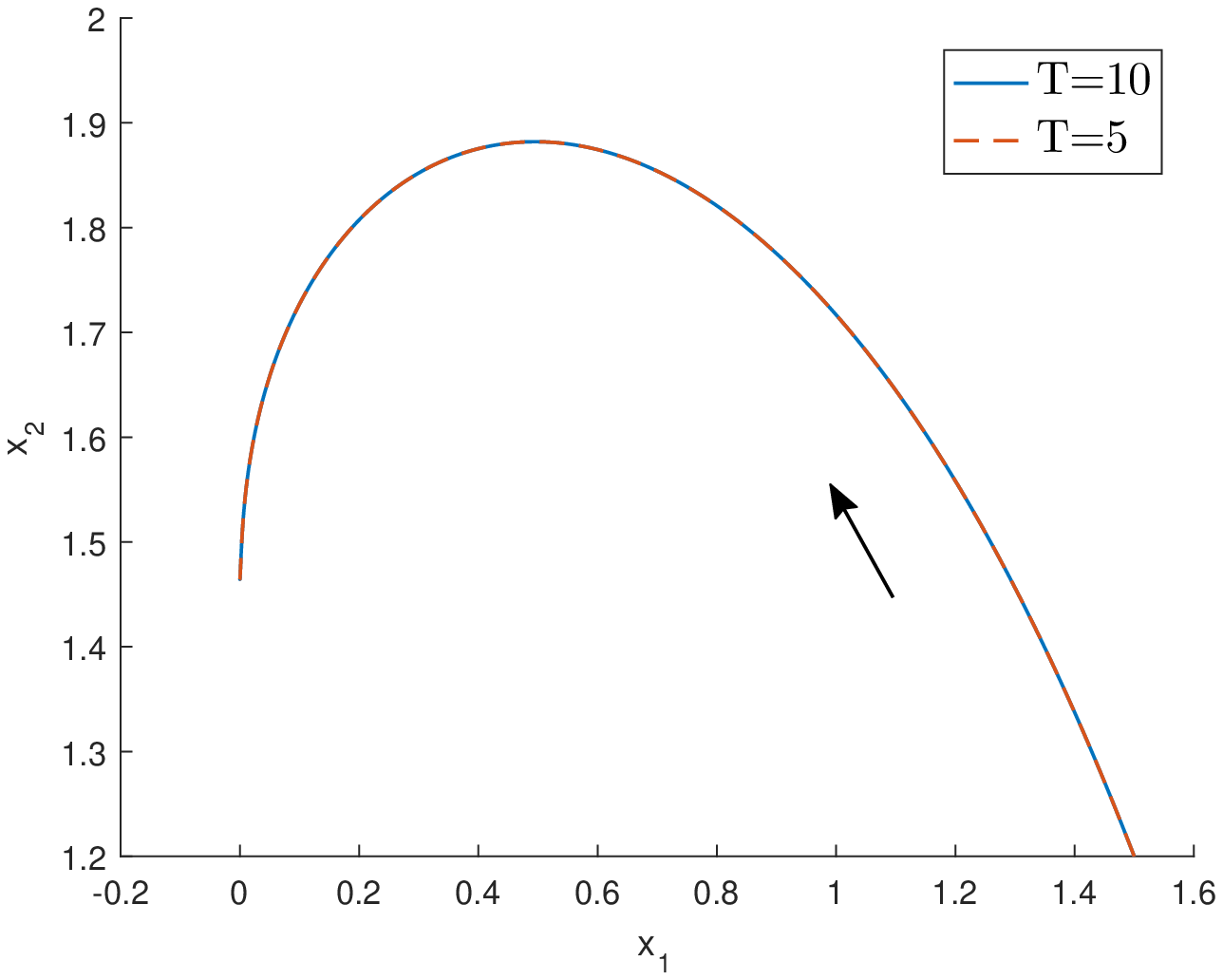}
\hspace{-0.2in}	\includegraphics[width=2.6in,height=2.2in]{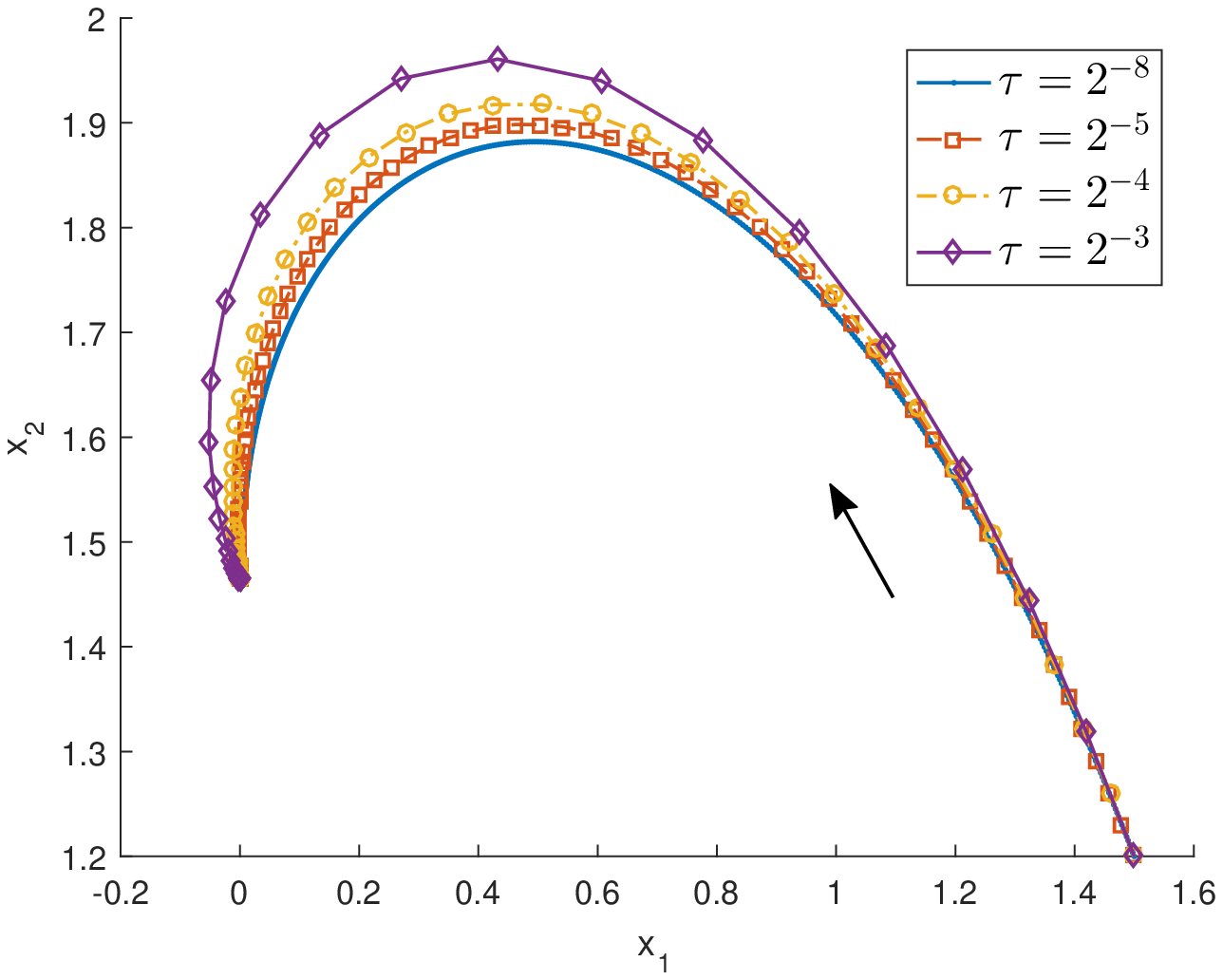}
	\caption{(Left) Numerical solution of $x(t)$ with $\tau=2^{-8}$ and different terminal time $T$; (Right) Numerical solution of $x(t)$ with $T=5$ and different  $\tau$. The symbols on the curves indicate the time steps.}
	\label{fig}
\end{figure}

\section{Extension to generalized high-index saddle dynamics for dynamical systems}
 In many autonomous dynamical systems $\dot x=F(x)$, the force $F(x)$ is not derived from the variation of some energy function/functional $E(x)$, which does not follow the formulations of the HiSD proposed in Section \ref{sec2}. In this case, the following GHiSD was developed in \cite{YinSCM} to compute the high-index saddle points for non-gradient systems:
\begin{equation}\label{ngsadk}
\left\{
\begin{array}{l}
\ds \frac{dx}{dt} =\bigg(I -2\sum_{j=1}^k v_jv_j^\top \bigg)F(x),\\[0.075in]
\ds \frac{dv_i}{dt}=(I-v_iv_i^\top)\tilde J(x)v_i -\sum_{j=1}^{i-1}v_jv_j^\top(\tilde J(x)+\tilde J^\top (x))v_i,~~1\leq i\leq k.
\end{array}
\right.
\end{equation}
 Here $\tilde J$ refers to the Jacobian of $F$, which is in general not symmetric. Compared with the high-index saddle dynamics (\ref{sadk}), which works for the gradient systems with the symmetric Hessian $J(x)$, $\tilde J(x)+\tilde J^\top (x)$ is used to replace $2J(x)$ in the dynamics of $\{v_i\}_{i=1}^k$. 
 
 Similar to (\ref{FDsadk}), we could propose the explicit Euler scheme of (\ref{ngsadk}) for $1\leq n\leq N_T$ as follows
 \begin{equation}\label{FDnsadk}
\left\{
\begin{array}{l}
\ds x_{n} =x_{n-1}+\tau\bigg(I -2\sum_{j=1}^k v_{j,n-1}v_{j,n-1}^\top \bigg)F(x_{n-1}),\\[0.1in]
\ds \tilde v_{i,n}=v_{i,n-1}+\tau\big( I-v_{i,n-1}v_{i,n-1}^\top\big)\tilde J(x_{n-1})v_{i,n-1}\\[0.1in]
\ds\hspace{0.7in}-\sum_{j=1}^{i-1}v_{j,n-1}v_{j,n-1}^\top\big(\tilde J(x_{n-1})+\tilde J^\top(x_{n-1})\big)v_{i,n-1},~~1\leq i\leq k,\\
\ds  v_{i,n}=\frac{1}{Y_{i,n}}\bigg(\ds\tilde v_{i,n}-\sum_{j=1}^{i-1}(\tilde v_{i,n}^\top v_{j,n})v_{j,n}\bigg),~~1\leq i\leq k.
\end{array}
\right.
\end{equation}

Intuitively, we may extend the developed methods and results in previous sections to GHiSD (\ref{ngsadk}) for non-gradient systems. Although there are differences between (\ref{sadk}) and (\ref{ngsadk}) as mentioned above, we notice that the derivations could be performed similarly. After carefully checking the preceding estimates, a key step lies in the quantification of $\tilde v_{m,n}^\top \tilde v_{i,n}$ in (\ref{MHH}). We thus calculate this product for $m<i$ for the case of generalized saddle dynamics (\ref{ngsadk}) as follows
\begin{equation*}
\begin{array}{l}
\ds \tilde v_{m,n}^\top \tilde v_{i,n}=\bigg[v_{m,n-1}+\tau\big( I-v_{m,n-1}v_{m,n-1}^\top\big)\tilde J(x_{n-1})v_{m,n-1}\\[0.15in]
\ds\qquad\qquad\qquad-\tau\sum_{j=1}^{m-1}v_{j,n-1}v_{j,n-1}^\top\big(\tilde J(x_{n-1})+\tilde J^\top(x_{n-1})\big)v_{m,n-1}\bigg]^\top\\[0.15in]
\ds\qquad\qquad\quad\bigg[v_{i,n-1}+\tau\big( I-v_{i,n-1}v_{i,n-1}^\top\big)\tilde J(x_{n-1})v_{i,n-1}\\[0.15in]
\ds\qquad\qquad\qquad-\sum_{j=1}^{i-1}v_{j,n-1}v_{j,n-1}^\top\big(\tilde J(x_{n-1})+\tilde J^\top(x_{n-1})\big)v_{i,n-1}\bigg]\\[0.15in]
\ds\qquad\quad~=\tau\big(v_{m,n-1}^\top \tilde J(x_{n-1}) v_{i,n-1}+v_{m,n-1}^\top \tilde J^\top(x_{n-1})v_{i,n-1}\\[0.1in]
\ds\qquad\qquad\qquad-v_{m,n-1}^\top \big(\tilde J(x_{n-1})+\tilde J^\top(x_{n-1})\big)v_{i,n-1}\big)+\tau^2[\cdots]=\tau^2[\cdots],
\end{array}
\end{equation*}
where we used the fact that
$$\begin{array}{l}
\ds v_{m,n-1}^\top \tilde J(x_{n-1}) v_{i,n-1}+v_{m,n-1}^\top \tilde J^\top(x_{n-1})v_{i,n-1}\\[0.05in]
\ds\qquad\qquad\qquad-v_{m,n-1}^\top \big(\tilde J(x_{n-1})+\tilde J^\top(x_{n-1})\big)v_{i,n-1}=0.
\end{array}  $$
Note that this nice property stems from the symmetrization $\tilde J(x)+\tilde J^\top(x)$ in the GHiSD (\ref{ngsadk}). By virtue of this symmetrization, we could derive the error estimates for explicit Euler scheme (\ref{FDnsadk}) in parallel with the proofs in previous sections, and thus show the numerical accuracy of GHiSD (\ref{ngsadk}) for non-gradient systems. 
 
\section{Conclusions}

In this paper we develop novel techniques to overcome the strong nonlinearity and the difficulties caused by the orthonormalization procedure to prove error estimates for Euler discretization of HiSD with respect to the time step size, which serves as an important compensation for the gap between the continuous problem and the numerical implementation. The developed methods and results are further extended to study the GHiSD for non-gradient systems, and thus provide theoretical supports for the accuracy of numerical computations. 

There are several other potential extensions of the current work. For instance, the ideas and techniques could be employed and improved to analyze implicit schemes or high-order methods like the  backward difference formulas  of HiSD in order to retain high-order accuracy. How to relax the linearly growth and Lipschitz conditions on the force $F$ and the Hessian $J$ (or Jacobian in non-gradient systems) in the error estimates is also an important but challenging topic as many energy surfaces do not enjoy these properties. We may further extend the methods to study the numerical schemes of the dimer-type methods such as the shrinking dimer dynamics \cite{ZhaDu}. However, as the multiplication of the Hessian and the vector is approximated by dimer methods in order to reduce the computational costs, the current Hessian-based proofs do not apply and additional investigations will be carried out in the near future.
Recently, a constrained high-index saddle dynamics (CHiSD) was developed to compute high-index saddle points of an energy functional subject to equality constraints \cite{CHiSD2021}. Applying the Riemannian gradients and Hessians, the CHiSD is derived as the dynamical system with a transformed gradient flow. In the numerical implementation, the retraction operator and vector transport are introduced to discretize the CHiSD. Thus, it will be very interesting to study the error estimates for numerical schemes of CHiSD in future.

It is also worth mentioning that in the current work we focus on the dynamics of HiSD on the finite interval $t\in [0,T]$ and prove error estimates to show the pathway convergence of numerical solutions of HiSD. For most applications, numerical schemes of HiSD could reach the target saddle point within a certain number of iterations (e.g., the observations in the left plot of Figure \ref{fig}), and thus the derived error estimate results in this work could apply by setting a suitable $T$. Nevertheless, the convergence to the saddle point is theoretically determined by the rate of $x(t)\rightarrow x_*$ as $t\rightarrow \infty$, thus it is desirable to perform the error estimate of $x_n-x_*$ for $n\rightarrow \infty$ as in the literature of optimization algorithms \cite{LiWan,Nes}. In a very recent work \cite{Luo} the convergence analysis of HiSD is performed to determine the convergence rate of $x_n-x_*$ for $n\rightarrow \infty$. It should be pointed out that the result in the current work that $v_{1,n}-v_1(t_n)$ has the first-order accuracy with respect to the time step size was borrowed in \cite{Luo} in order to prove the approximation of eigenvector, which again shows the importance of novel techniques proposed in this work.

\section*{Acknowledgements}
This work was partially supported by the National Key Research and Development Program of China  2021YFF1200500; the National Natural Science Foundation of China No.~12050002 and 21790340; the International Postdoctoral Exchange Fellowship Program No.~YJ20210019; the China Postdoctoral Science Foundation Nos.~2021TQ0017 and 2021M700244.

\end{document}